\documentclass{amsart}
\usepackage{latexsym, url}
\usepackage{hyperref}
\usepackage[utf8]{inputenc}
\usepackage{amsthm}
\usepackage{amsmath}
\usepackage{amsfonts}
\usepackage{amssymb}
\usepackage[final]{showkeys}    
\usepackage[dvips]{graphicx}
\usepackage{xypic}
\addtolength\textwidth{1 in}
\addtolength\hoffset{-.5 in}
\thispagestyle{plain}

\input xy
\xyoption{all}

\newtheorem{theo}{Theorem}[section]
\newtheorem{thm}[theo]{Theorem}

\newtheorem{fact}[theo]{Fact}
\newtheorem{nota}[theo]{Notation}

\newtheorem{propo}[theo]{Proposition}
\newtheorem{defi}[theo]{Definition}
\newtheorem{defin}[theo]{Definition}

\newtheorem{remark}[theo]{Remark}

\newcommand\lsnk{LS(\ck)}

\newcommand\sless{\triangleleft}
\newcommand\sgreat{\triangleright}
\newcommand\sgeq{\trianglerighteq}

\newcommand\Mod{\operatorname{Mod}}

\newcommand\Homk{\operatorname{Hom}_\ck}

\newcommand\Set{\operatorname{\bf Set}}

\newcommand\Str{\operatorname{\bf Str}}

\newcommand\Ab{\operatorname{\bf Ab}}
\newcommand\FrAb{\operatorname{\bf FrAb}}

\newcommand\dom{\operatorname{dom}}

\newcommand\ca{\mathcal {A}}

\newcommand\calf{\mathfrak {F}}
\newcommand\cu{\mathcal {U}}

\newcommand\ck{\mathcal {K}}
\newcommand\cl{\mathcal {L}}
\newcommand\cm{\mathcal {M}}

\newcommand\cp{\mathfrak {P}}

\newcommand{\pres}[2]{\operatorname{\bf Pres}_{#1}(#2)}
\newcommand{\Red}[3]{\operatorname{\bf Red}_{#1,#2}(#3)}
\newcommand{\Mono}[1]{\operatorname{\bf Mono}(#1)}

\newcommand{\LS}{\operatorname{LS}}
\newcommand{\Ll}{\mathbb{L}}

 \newbox\noforkbox \newdimen\forklinewidth
\forklinewidth=0.3pt \setbox0\hbox{$\textstyle\smile$}
\setbox1\hbox to \wd0{\hfil\vrule width \forklinewidth depth-2pt
 height 10pt \hfil}
\wd1=0 cm \setbox\noforkbox\hbox{\lower 2pt\box1\lower
2pt\box0\relax}


\newcommand{\bK}{\mathbb{K}}

\newcommand{\cP}{\mathcal{P}}
\newcommand{\bL}{\mathbb{L}}

\newcommand{\comment}[1]{}

\setcounter{tocdepth}{1}

\title[Tameness, powerful images, and large cardinals]
      {Tameness, powerful images, and large cardinals}
\date{\today\\
AMS 2010 Subject Classification: Primary: 03E55, 18C35. Secondary: 03E75, 03C20, 03C48, 03C75.
}
\keywords{weakly compact cardinals, accessible categories, abstract elementary classes, Galois types, locality}

\parindent 0pt
\parskip 5pt

\setcounter{tocdepth}{1}

\author[W. Boney]{Will Boney}
\urladdr{http://wboney.wp.txstate.edu}
\address{Mathematics Department \\ Texas State University \\ San Marcos, Texas, USA}

\author[M. Lieberman]{Michael Lieberman}
\urladdr{http://www.math.muni.cz/\textasciitilde lieberman/}
\address{Institute of Mathematics, Faculty of Mechanical Engineering, Brno University of Technology, Brno, Czech Republic}
\address{Department of Mathematics and Statistics, Faculty of Science, Masaryk University, Brno, Czech Republic}

\begin{document}

\begin{abstract} We provide comprehensive, level-by-level characterizations of large cardinals, in the range from weakly compact to strongly compact, by closure properties of powerful images of accessible functors.  In the process, we show that these properties are also equivalent to various forms of tameness for abstract elementary classes.  This systematizes and extends results of \cite{boun}, \cite{BT-R}, \cite{lcatth}, and \cite{LRclass}.
\end{abstract} 

\maketitle


\section{Introduction}

Recent years have seen the rapid development of a literature concerning the implications of set-theoretic assumptions---and, in particular, large cardinal axioms---for the structure of certain useful classes of categories.  This is particularly true for \emph{accessible categories}, a category-theoretic rendering of the classes of models axiomatizable in infinitary logics (we recall all of the necessary terminology in Section~\ref{prelims} below).  In fact, following \cite{BT-R}, \cite{boun}, and \cite{lcatth}, we will exhibit equivalent characterizations of large cardinals in terms of, on the one hand, clean and practical closure conditions on the images of well-behaved functors between accessible categories, and, on the other hand, good behavior (specifically, \emph{tameness}) of types in abstract model theory.

While the extent to which the structure of accessible categories depends on set theory has come as something of a surprise, it has always been clear that they occupy a complicated place between pure, i.e. abstract, category theory and ensemblist mathematics. Although there is no assumption of, or recourse to, underlying sets of objects in the definitions, constructions, and basic theory of accessible categories, infinite cardinals abound, both as indices of accessibility (``Let $\ck$ be a $\kappa$-accessible category\dots") and as sizes (or, rather, \emph{presentability ranks}) of objects.  One might hope that these cardinals could be treated simply as an ordered family of indices, avoiding their more delicate arithmetical and combinatorial properties, but this has never been a possibility: per the foundational text of Makkai and Par\'e, for every $\mu$ and $\lambda$, every $\lambda$-accessible category is $\mu$-accessible just in case $\mu$ is \emph{sharply larger than} $\lambda$, \cite[2.3.1]{makkai-pare}, a relation that reduces to cardinal arithmetic (\cite[2.5]{internal-sizes-v2}, and Remark~\ref{remsharpclosed} below)\footnote{We note that in \emph{locally presentable categories}---special accessible categories with arbitrary (co)limits, corresponding to models of infinitary \emph{limit theories}---such sensitivities tend to disappear, barring extreme cases along the lines of Vop\v enka's Principle, cf. \cite[\S 6.B]{adamek-rosicky}}.  That is, the \emph{accessibility spectrum} of a general accessible category is sensitive to cardinal arithmetic, and simplifies considerably under, e.g. instances of SCH (\cite[2.5, 2.7]{internal-sizes-v2}).  Similar phenomena are observed in connection with the \emph{existence spectrum} of an accessible category $\ck$, i.e. those cardinals $\kappa$ such that it contains an object of presentability rank $\kappa$.  The fact that any sufficiently large object has presentability rank a successor cardinal follows, for example, from GCH (\cite[2.3(5)]{rosbek}) or, again, instances of SCH (\cite[3.11]{internal-sizes-v2}, \cite[5.5]{lrvfilt}).  As SCH holds above a sufficiently strongly compact cardinal (see \cite[20.8]{jechbook}) results of this form also arise as consequences of a large cardinal assumption.

Following, and ultimately systematizing, results of \cite{makkai-pare}, \cite{BT-R}, and \cite{lcatth}, we here concern ourselves with a more straightforwardly category-theoretic family of consequences of large cardinals: the closure properties of the images of \emph{accessible functors}, i.e functors between accessible categories that preserve sufficiently directed colimits.  The historical question motivating this line of inquiry is refreshingly simple: consider the category $\Ab$ consisting of abelian groups and group homomorphisms, and the full subcategory consisting of free abelian groups, $\FrAb$.  While $\Ab$ is finitely accessible (it has all directed colimits---that is, direct limits---and any abelian group is a directed colimit of finitely presented ones), things are much more complicated when it comes to $\FrAb$.  Under V=L, $\FrAb$ is not closed under $\kappa$-directed colimits in $\Ab$ for any uncountable regular $\kappa$: one can always construct a group $G$ of size $\kappa$ that is indecomposable---so, in particular, non-free---all of whose $<\kappa$-generated subgroups are free (\cite{emindec}, building on \cite{shegrp}).  Assuming a strongly compact cardinal $\kappa$, on the other hand, this problem disappears: per \cite[3.10]{emalmostfree}, any abelian group whose $<\kappa$-generated subgroups are free is free, and $\FrAb$ is closed under $\kappa$-directed colimits in $\Ab$.  That is, $\FrAb$ is \emph{$\kappa$-accessibly embedded} in $\Ab$.   To obtain this phenomenon at small cardinals (although still using large cardinals), see \cite{magidor-shelah}.

As a first step toward generalizing this result, note that $\FrAb$ is precisely the image of the free functor $F:\Set\to\Ab$, which takes a set to the free abelian group on its elements.  In fact, $\FrAb$ is the \emph{powerful image} of this functor, i.e. the closure of the image of $F$ under subobjects in $\Ab$: since any subgroup of a free abelian group is free abelian, $\FrAb$ is already closed in this sense.  The functor $F$ is finitely accessible---it preserves directed colimits---and, as we have just seen, closure of its powerful image under colimits in the codomain $\Ab$ is highly sensitive to set theory.  One might then be inclined to ask, for example, whether, and to what extent, closure properties of the powerful images of general accessible functors are similarly conditioned by the existence of large cardinals or other set-theoretic hypotheses.  The cornerstone result in this area is \cite[5.5.1]{makkai-pare}: if $\kappa$ is strongly compact, and $F:\ck\to\cl$ is an accessible functor below $\kappa$, then the powerful image of $F$ is $\kappa$-accessibly embedded in $\cl$.\footnote{In fact, the standard gloss of this result is that ``the powerful image is $\kappa$-accessible,'' or perhaps ``the powerful image is $\kappa$-accessible and $\kappa$-accessibly embedded'' (\cite{BT-R}), but as we highlight in Theorems~\ref{thmcptclosed} and ~\ref{thmcptclosednok}, (pre)accessibility of the powerful image holds without any assumption beyond ZFC.  We therefore zero in on the closure condition on the powerful image, which genuinely depends on the large cardinal type of $\kappa$, rather than the structure of the powerful image as a category in its own right.}  Careful analysis of the proof of this theorem in \cite{BT-R} led not only to the realization that $\kappa$ need only be \emph{almost} strongly compact, but also a clearer parsing of the connection between the properties of $\kappa$ as a large cardinal and the closure of the powerful image under colimits of a given shape.  This allows one to link weaker compactness notions with weaker closure conditions---see \cite{lcatth}, and Section~\ref{seccomppow} below.

This is, however, just the first link in a chain of implications connecting large cardinals, category theory, and abstract model theory.  The last of the three enters the picture via \cite{boneylarge} and \cite{LRclass}.  The centerpiece result of the former is that if $\kappa$ is a strongly compact cardinal, the Galois types in any \emph{abstract elementary class} (or \emph{AEC}) essentially below $\kappa$ are $<\kappa$-tame; that is, they are completely determined by their restrictions to $<\kappa$-sized submodels of their domains \cite[4.5]{boneylarge}.  The same result was subsequently derived in \cite{LRclass}, but by different means---one can characterize equivalence of Galois types via the powerful image of an accessible functor, in which case $<\kappa$-tameness corresponds precisely to $\kappa$-accessible embeddability of this powerful image.  By \cite[5.5.1]{makkai-pare}, as we have seen, $\kappa$-accessible embeddability, too, follows from strong compactness of $\kappa$.

This chain of implications can be tightened to form an equivalence---as noted above, \cite[3.2]{BT-R} shows that any accessible functor below an almost strongly compact cardinal $\kappa$ has powerful image $\kappa$-accessibly embedded in its codomain.  Moreover, we have seen that this in turn is enough to give $<\kappa$-tameness of AECs below $\kappa$.  The loop is closed in \cite{boun} (building on \cite{sh932}), where the authors build a specific AEC $\ck$ that, subject to certain technical conditions, allows one to infer an instance of compactness from an instance of tameness of $\ck$.  So we are left with an equivalence:
\begin{thm}[{\cite[Corollary 4.14]{boun}}]\label{buthm}
	Let $\kappa$ be an infinite cardinal with $\mu^\omega<\kappa$ for all $\mu<\kappa$.  The following are equivalent:
	\begin{enumerate}
		\item $\kappa$ is almost strongly compact.
		\item The powerful image of any accessible functor $F:\ck\to\cl$ below $\kappa$ is $\kappa$-accessibly embedded in $\cl$.
		\item Every AEC essentially $\ck$ below $\kappa$ is $<\kappa$-tame.
	\end{enumerate}
\end{thm}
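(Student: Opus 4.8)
The plan is to establish the equivalence as a cycle $(1)\Rightarrow(2)\Rightarrow(3)\Rightarrow(1)$, assembling the three threads --- category theory, abstract model theory, and set theory --- described in the introduction. The first two implications are, in effect, transfers along correspondences that are already essentially available in the literature, while the third is where the real work lies.

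For $(1)\Rightarrow(2)$ I would follow the argument of \cite[3.2]{BT-R}. Given an accessible functor $F\colon\ck\to\cl$ below $\kappa$, say $\lambda$-accessible with $\lambda<\kappa$, recall that an object of $\cl$ lies in the powerful image of $F$ precisely when it is a subobject of some object in the image of $F$. To show this class is closed under $\kappa$-directed colimits, one takes such a colimit of objects in the powerful image and, using almost strong compactness to supply a suitable elementary embedding (equivalently, a fine ultrafilter) at each relevant level below $\kappa$, lifts the colimiting cocone to the domain $\ck$; applying $F$ and tracking the induced monomorphisms then exhibits the colimit as a subobject of an object in the image. The only bookkeeping concerns keeping all presentability ranks below $\kappa$ so that $F$ may legitimately be applied, and this is routine given the accessibility hypotheses.

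For $(2)\Rightarrow(3)$ I would invoke the translation developed in \cite{LRclass}. Given an AEC $\ck$ with $\LS(\ck)<\kappa$, one builds an accessible functor below $\kappa$ whose powerful image encodes Galois-type equivalence: two amalgamation diagrams (that is, type representatives) are identified in the powerful image exactly when the corresponding Galois types are equal. Under this dictionary, $\kappa$-accessible embeddability of the powerful image says precisely that equality of Galois types over a model is detected by equality over its $<\kappa$-sized submodels --- which is the definition of $<\kappa$-tameness. Thus $(2)$, applied to this particular functor, yields $<\kappa$-tameness of $\ck$.

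The implication $(3)\Rightarrow(1)$ is the crux, and is the one carried out in \cite{boun} building on the construction of \cite{sh932}. Here one engineers a single AEC $\ck$, with $\LS(\ck)<\kappa$ by virtue of the hypothesis $\mu^\omega<\kappa$ for all $\mu<\kappa$, whose models and strong-substructure relation code the combinatorial data witnessing (almost) strong compactness. The argument runs contrapositively: if $\kappa$ fails to be almost strongly compact, the failure produces a witness --- a suitable incompactness at some level below $\kappa$ --- which in turn yields a pair of Galois types over a model of size $\kappa$ that agree on every $<\kappa$-sized submodel yet differ globally, contradicting $<\kappa$-tameness of $\ck$. The main obstacle is exactly this last step: one must design the coding AEC finely enough that its tameness is \emph{genuinely} equivalent to compactness rather than merely implied by it, and the cardinal-arithmetic hypothesis $\mu^\omega<\kappa$ is precisely what is needed both to control the size of the coding apparatus and to hold the L\"owenheim--Skolem number below $\kappa$.
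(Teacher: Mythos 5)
Your proposal follows exactly the route the paper itself describes: the cycle $(1)\Rightarrow(2)$ via \cite[3.2]{BT-R}, $(2)\Rightarrow(3)$ via the powerful-image encoding of Galois-type equivalence from \cite{LRclass} (the argument refined in Section~\ref{sectame} of the paper), and $(3)\Rightarrow(1)$ via the coding AEC of \cite{boun}, building on \cite{sh932} --- which is precisely how Theorem~\ref{buthm} (cited as \cite[Corollary 4.14]{boun}) is assembled in the literature and summarized in the paper's introduction. The only loose point is your gloss of $(1)\Rightarrow(2)$ as ``lifting the colimiting cocone to the domain $\ck$'' --- the actual argument instead encodes the powerful image syntactically and uses compactness/ultraproducts to produce a monomorphism from the colimit into an object of the image --- but since you defer to \cite{BT-R} for the details, this does not affect correctness.
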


There is a great deal more to be said, however.  In conjunction with \cite{boneylarge}, \cite{boun} gives a much broader and subtler array of equivalences between gradations of compactness and gradations of tameness.  One can also fine-tune the argument of \cite{BT-R} to match these finer gradations.  As noted in \cite{lcatth}, \emph{almost measurability} of a cardinal $\kappa$ implies the closure of powerful images under colimits of $\kappa$-chains, which implies $<\kappa$-locality of types in AECs below $\kappa$.  Again, subject to technical assumptions, \cite{boun} allows us to infer almost measurability from locality, yielding another equivalence along similar lines.  Here, we focus on two different gradings of compactness (along with some parameters):
\begin{itemize}
	\item \emph{$(\delta,\theta)$-compactness}, a relatively classical logical compactness property for $\theta$-sized theories in $\Ll_{\delta, \delta}$ (see Definition \ref{defparamcpt}); and
	\item \emph{$(\delta, \theta)$-compactness for ultrafilters}, which amounts to the existence of certain $\delta$-complete ultrafilters (see Definition \ref{sc-u-def}).
\end{itemize}
While the category-theoretic implications of these principles are different, and we largely treat them in parallel, the two notions correspond, crucially, under a cardinal arithmetic assumption that we must make in proving our main equivalence result along the lines of Theorem~\ref{buthm}.  So ultimately this is a distinction with very little difference.

We give a brief review of the terminology involved in Section~\ref{prelims}, including the precise definitions of the above-mentioned variations on $(\delta,\theta)$-compactness.  Section~\ref{seccomppow} derives corresponding closure conditions for powerful images of accessible functors from $(\delta,\theta)$-compactness (see Theorems~\ref{thmcptclosed} and \ref{thmcptclosednok}).  In Section~\ref{sectame} we derive a suitable form of tameness from the stronger of the two closure conditions, Theorem~\ref{thmclosedtame}.   In Section~\ref{secequiv}, we bring the results of \cite{boun} into play, closing the loop, and obtaining the promised level-by-level equivalence refining Theorem~\ref{buthm}.

Although we have made an effort to keep the paper as self-contained as possible,  basic familiarity with accessible categories, see e.g. \cite{makkai-pare} and \cite{adamek-rosicky}; with AECs and tameness, see e.g. \cite{baldwinbk}; with links between large cardinals and abstract model theory, see \cite{boun} and \cite{boneylarge}. The papers \cite{BT-R}, \cite{boun}, and \cite{lcatth} provide useful context on the broader project of producing model- and category-theoretic characterisations of large cardinals.

We owe a profound debt to the anonymous referee, whose painstaking reading and commentary led to substantial improvements in the exposition and results.

\section{Preliminaries}\label{prelims}

The large cardinals that we consider here are all related to the compactness of various infinitary logics, and provide a grading of large cardinals in the range from weakly compact to strongly compact cardinals.  We note that there has been an incredible proliferation of weak forms of compactness.  The first compactness notion we consider is the logical formulations, as these are easily motivated, correspond clearly to category-theoretic closure conditions, and tend to overlap nicely---at least on a global level---with the other existing formulations (see \cite{hayut} for more on the local behavior).  Recall that $\bL_{\kappa, \lambda}$ refers to the logic that allows conjunction and disjunction of $<\kappa$-many formula and quantification over $<\lambda$-many elements of the universe.  A theory $T \subset \bL_{\kappa, \lambda}$ is called \emph{$<\kappa$-satisfiable} if every $<\kappa$-sized subset has a model.

As motivation for the definition that follows, we recall that finitary first order logic is compact: for any theory $T$ in a finitary language $\bL_{\omega,\omega}$, if $T$ is finitely satisfiable---all of its finite subsets are satisfiable---then $T$ is satisfiable.  Whether this is true in the infinitary case depends on large cardinals: for an uncountable cardinal $\kappa$, any $<\kappa$-satisfiable theory $T$ in an infinitary language $\bL_{\kappa\kappa}$ is satisfiable just in case $\kappa$ is a \emph{strongly compact cardinal}.  Weakly compact cardinals are defined similarly, but with a restriction on the size of the theories involved---$<\kappa$-satisfiability implies satisfiability only for theories in $\bL_{\kappa\kappa}$ of cardinality $\kappa$.  In the latter definition, it is clear that there are, morally speaking, three parameters to play with, namely the infinitary character of the language, the degree of partial satisfiability, and the cardinality of the theories for which the conclusion holds.  This leads us to define:

\begin{defi}\label{defparamcpt}
Let $\kappa$ be an inaccessible cardinal.  We say that $\kappa$ is \emph{$(\delta,\theta)$-compact}, $\delta\leq\kappa\leq\theta$, if every $<\kappa$-satisfiable theory of size $\theta$ in $\bL_{\delta, \delta}$ is satisfiable.  We say $\kappa$ is $(\delta,<\theta)$-compact if it is $(\delta,\theta')$-compact for all $\theta'<\theta$, and $(\delta,\infty)$-compact if it is $(\delta,\theta)$-compact for all $\theta$.
\end{defi}

We note that this encompasses a number of standard notions as special cases:

\begin{enumerate} 
	\item $\kappa$ is strongly compact if and only if it is $(\kappa,\infty)$-compact.
	\item $\kappa$ is almost strongly compact if and only if it is $(\delta, \infty)$-compact for every $\delta < \kappa$.
	\item $\kappa$ is weakly compact if and only if it is $(\kappa,\kappa)$-compact.
\end{enumerate}

\begin{remark}
\begin{enumerate}
	\item Although we have defined compactness in terms of $\bL_{\delta, \delta}$, we would get the same strength if it were defined in terms of $\bL_{\delta,\omega}$.
	\item The `almost' adjective was introduced in \cite{bagaria-magidor} and indicates that the strong compactness is only guaranteed to exist cofinally in $\kappa$.
\end{enumerate}
\end{remark}

We also make use of the ultrafilter formulation of compactness.  Our motivation for this is primarily practical: the parameterized, logical compactness characterizations yield an additional hypothesis (for instance, the requirement that the diagrams in Theorem \ref{thmcptclosed}.(2) are made up of $\kappa$-presentables) which was not present in previous versions and that only seemed to be removed by cardinal arithmetic assumptions (typically $\theta = \theta^{<\kappa}$).  However, the ultrafilter formulation removes these extra hypotheses naturally, and the previous use of cardinal arithmetic is explained by its use in Remark \ref{c-u-sc-rem}.

\begin{defin}\label{sc-u-def}
$\kappa$ is \emph{$(\delta, \theta)$-compact for ultrafilters} iff there is a fine, $\delta$-complete ultrafilter on $\cP_\kappa \theta$, where $\cP_\kappa\theta$ is the set of all subsets of $\theta$ of cardinality less than $\kappa$.
\end{defin}

Recall that an ultrafilter $\cu$ on $\cP_\kappa\theta$ is said to \emph{fine} if for any $\alpha\in\lambda$, the set
$$[\alpha]:=\{X\in\cP_\kappa\theta\,|\,\alpha\in X\}$$
is in $\cu$.  If $\cu$ is also $\kappa$-complete, this implies that, for every $s \in \cP_\kappa \theta$, the set $\{X \in \cP_\kappa\theta\,|\,s\subset X\}$ is in $\cu$.  On a global level, $(\delta, \infty)$-compactness and $(\delta, \infty)$-compactness for ultrafilters coincide.  However, on a local level, the notion of compactness for ultrafilters is stronger.  \cite[Theorem 3.5 and Proposition 3.9]{boneymodlcs} emphasized the connection between the existence of fine, highly complete ultrafilters on one hand and compactness based on \emph{filtrations} of theories (as opposed to their cardinality) on the other.  In this case, the argument gives the following characterization of compactness for ultrafilters.

\begin{fact}\label{log-comp-c-c-fact}
$\kappa$ is $(\delta, \theta)$-compact for ultrafilters iff if a theory $T$ in $\Ll_{\delta, \delta}(\tau)$ can be written as an increasing union $T = \cup_{s \in \cP_\kappa \theta} T_s$ such that each $T_s$ is satisfiable, then $T$ is satisfiable.
\end{fact}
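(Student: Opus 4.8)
The plan is to prove both implications through the standard correspondence between fine ultrafilters and ultraproducts, in the spirit of the classical \L os-style argument that strong compactness yields compactness for $\Ll_{\kappa,\kappa}$, but carefully tracking the completeness degree $\delta$ and the index set $\cP_\kappa\theta$ throughout. The one piece of logical input I will invoke is the classical fact that an ultraproduct modulo a $\delta$-complete ultrafilter satisfies \L os's theorem for $\Ll_{\delta,\delta}$; \emph{fineness} of the ultrafilter is then precisely what matches the $\cP_\kappa\theta$-indexed filtration of the theory.

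\textbf{Forward ($\Rightarrow$).} Fix a fine, $\delta$-complete ultrafilter $\cu$ on $\cP_\kappa\theta$, and let $T=\bigcup_{s\in\cP_\kappa\theta}T_s$ be an increasing union with each $T_s$ satisfiable, witnessed by $M_s\models T_s$. Form the ultraproduct $M=\prod_{s}M_s/\cu$. Since $\cu$ is $\delta$-complete, \L os's theorem holds for $\Ll_{\delta,\delta}(\tau)$, so for every $\varphi\in\Ll_{\delta,\delta}(\tau)$ we have $M\models\varphi$ iff $\{s: M_s\models\varphi\}\in\cu$. Given $\varphi\in T$, the set $A_\varphi=\{s:\varphi\in T_s\}$ is upward closed by monotonicity of the filtration and nonempty since $\varphi$ appears in some $T_{s_0}$; moreover $M_s\models\varphi$ for every $s\in A_\varphi$. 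It therefore suffices to show $A_\varphi\in\cu$, and this is exactly where fineness enters: $A_\varphi\supseteq\{s: s_0\subseteq s\}=\bigcap_{\alpha\in s_0}[\alpha]$, each $[\alpha]$ lies in $\cu$ by fineness, and closing under the completeness of $\cu$ places this tail set in $\cu$. Hence $M\models\varphi$ for all $\varphi\in T$, so $M\models T$.

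\textbf{Backward ($\Leftarrow$).} Here I will manufacture the ultrafilter as a definable trace in a model of a suitably filtered theory. Work over the full structure $\mathfrak A=(\cP_\kappa\theta;\,(\dot A)_{A\subseteq\cP_\kappa\theta})$ in which each unary predicate $\dot A$ names the corresponding subset, and adjoin a fresh constant $c$. Let $T$ consist of: (i) the $\Ll_{\delta,\delta}$ sentences true in $\mathfrak A$ expressing that the $\dot A$ behave as a $\delta$-complete field of sets (Boolean coherence $\forall x(x\in\dot A\leftrightarrow\neg(x\in\dot{\complement A}))$, emptiness $\forall x\,\neg(x\in\dot\emptyset)$, and the $\delta$-intersection schema $\forall x(\bigwedge_{i<\gamma}x\in\dot A_i\to x\in\dot{\bigcap_i A_i})$ for each family of length $\gamma<\delta$); together with (ii) the fineness axioms $c\in\dot{[\alpha]}$ for $\alpha<\theta$. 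Filter $T$ by placing all of (i) into every piece and the axiom $c\in\dot{[\alpha]}$ into $T_s$ precisely when $\alpha\in s$. Each $T_s$ is satisfiable: interpret $c$ as $s$ itself in $\mathfrak A$, which works since $s\in\cP_\kappa\theta$ and $\alpha\in s$ gives $s\in[\alpha]$. The union is increasing, so by hypothesis $T$ has a model $N$. Setting $\cu=\{A\subseteq\cP_\kappa\theta: N\models c\in\dot A\}$, the axioms in (i) force $\cu$ to be a $\delta$-complete ultrafilter and the axioms in (ii) force it to be fine, witnessing $(\delta,\theta)$-compactness for ultrafilters.

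\textbf{Main obstacle.} The backward direction is essentially bookkeeping once the filtered theory is set up correctly, its only delicate point being to keep every axiom inside $\Ll_{\delta,\delta}$ (so that the $\delta$-intersection schema, but nothing wider, is available). The genuine crux is the membership $A_\varphi\in\cu$ in the forward direction: fineness delivers the singleton sets $[\alpha]$, but converting these into the upward-closed tail set $\{s:s_0\subseteq s\}$ attached to a formula's first appearance is exactly the interaction between fineness, the completeness degree of $\cu$, and the shape of the given filtration. I expect this to be the step demanding the most care, and the reason the fine-ultrafilter hypothesis, rather than the mere existence of a $\delta$-complete ultrafilter, is the correct one.
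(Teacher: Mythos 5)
Your backward direction is sound: coding a prospective ultrafilter as the trace of a constant $c$ over a field-of-sets structure, with the fineness axiom $\dot{[\alpha]}(c)$ placed into $T_s$ exactly when $\alpha\in s$, is precisely the folklore argument the paper alludes to right after this Fact, and every axiom you need (Boolean coherence, properness, the $\delta$-intersection schema) does live in $\Ll_{\delta,\delta}$. The problem is the forward direction, at exactly the step you yourself flag as the crux and then wave through. You need $\bigcap_{\alpha\in s_0}[\alpha]=\{s : s_0\subseteq s\}\in\cu$, and you justify this by ``fineness plus the completeness of $\cu$.'' But $s_0\in\cP_\kappa\theta$ may have any cardinality below $\kappa$, while $\cu$ is only assumed $\delta$-complete, and the whole point of the two-parameter definition is that $\delta$ may be strictly smaller than $\kappa$ (the intended applications are to almost strongly compact cardinals). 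The paper is explicit about this immediately after Definition~\ref{sc-u-def}: it is $\kappa$-completeness, not $\delta$-completeness, that upgrades fineness on singletons to membership of the tail sets $\{X : s\subseteq X\}$. So your argument establishes the Fact only in the case $\delta=\kappa$; \L os's theorem for $\Ll_{\delta,\delta}$ under $\delta$-completeness is fine, but it is pointed at sets that need not be in the ultrafilter.

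Moreover, the gap is not repairable within your approach: the failure is real, not merely unproved. If $\cv$ is a fine $\delta$-complete ultrafilter on $\cP_\delta\theta$, its pushforward along the inclusion $\cP_\delta\theta\subseteq\cP_\kappa\theta$ is a fine $\delta$-complete ultrafilter $\cu$ on $\cP_\kappa\theta$ concentrating on $\cP_\delta\theta$; then no tail set $\{s : s_0\subseteq s\}$ with $|s_0|\geq\delta$ lies in $\cu$, and since nothing forces a witness model $M_s$ to satisfy sentences outside $T_s$, one can choose $M_s\models\neg\varphi$ for all $s$ with $\varphi\notin T_s$, making $\prod_s M_s/\cu$ fail sentences of $T$ whose first appearance is at an index of size $\geq\delta$. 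Your construction then produces a structure that is not a model of $T$, so the proof collapses. This is why the argument the paper relies on (the citation to \cite{boneymodlcs}, mirrored in the paper's own proof of Proposition~\ref{categorified-prop}) never takes an ultraproduct of the witness models: it passes to the ultrapower embedding $j:V\to\mathcal{M}$, uses fineness only to obtain the covering property (any $\theta$-sized subset of $\mathcal{M}$ is contained in a set of $\mathcal{M}$-cardinality $<j(\kappa)$), locates a single index $i_*\in j(I)$ above all of $j``I$ by $j(\kappa)$-directedness of $j(I)$ in $\mathcal{M}$, and concludes via satisfiability of $T^*_{i_*}$ in $\mathcal{M}$ and absoluteness of $\Ll_{\delta,\delta}$-satisfiability (the critical point of $j$ being at least $\delta$). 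That maneuver is what substitutes for the tail-set membership you cannot have; without it, or without strengthening ``fine'' to mean that all tail sets indexed by $\cP_\kappa\theta$ belong to $\cu$ (a strictly stronger hypothesis when $\delta<\kappa$), the forward direction does not go through.
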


This implies the cardinality-based satisfiability criteria of a $(\delta, \theta)$-compact cardinal, but is much more powerful; in particular, it also applies to definable class theories.  We will use a version of this statement more closely adapted our purposes:

\begin{propo}\label{categorified-prop}
Let $\kappa$ be $(\delta, \theta)$-compact for ultrafilters and $I$ a $\kappa$-directed poset, $|I|\leq\theta$.  If $T$ is an $\Ll_{\delta, \delta}(\tau)$-theory that can be written as an increasing union $T = \cup_{i \in I} T_i$ indexed by $I$ such that each $T_i$ is satisfiable, then $T$ is satisfiable.
\end{propo}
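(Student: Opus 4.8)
The plan is to reduce to Fact~\ref{log-comp-c-c-fact} by reindexing the given $\kappa$-directed union along the poset $\cP_\kappa\theta$. Since $|I|\le\theta$, fix an injection of $I$ into $\theta$, and henceforth identify $I$ with a subset of $\theta$; then for each $X\in\cP_\kappa\theta$ the intersection $X\cap I$ is a subset of $I$ of cardinality $<\kappa$.

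For $X\in\cP_\kappa\theta$ I would set $T_X := \bigcup_{i\in X\cap I} T_i$. Three things need checking. First, monotonicity: if $X\subseteq Y$ then $X\cap I\subseteq Y\cap I$, whence $T_X\subseteq T_Y$, so $\langle T_X : X\in\cP_\kappa\theta\rangle$ is an increasing union in the sense required by the Fact. Second, that the pieces exhaust $T$: each singleton $\{i\}$ (for $i\in I$) lies in $\cP_\kappa\theta$ since $\kappa$ is infinite, and $T_{\{i\}}=T_i$, so $\bigcup_{X}T_X\supseteq\bigcup_{i\in I}T_i=T$; the reverse inclusion is immediate, as every $T_i\subseteq T$. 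Third, and this is the only place the hypotheses on $I$ do real work, that each $T_X$ is satisfiable: because $X\cap I$ has size $<\kappa$ and $I$ is $\kappa$-directed, it has an upper bound $j\in I$, and monotonicity of the original chain gives $T_X=\bigcup_{i\in X\cap I}T_i\subseteq T_j$; since $T_j$ is satisfiable, so is $T_X$ (and if $X\cap I=\emptyset$ then $T_X$ is the empty theory, which is trivially satisfiable).

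With this reindexing in hand, $T=\bigcup_{X\in\cP_\kappa\theta}T_X$ is an increasing union of satisfiable subtheories of $\Ll_{\delta,\delta}(\tau)$ indexed by $\cP_\kappa\theta$, so Fact~\ref{log-comp-c-c-fact} applies directly and yields that $T$ is satisfiable.

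I do not expect a serious obstacle here: the content is entirely in the bookkeeping that turns an arbitrary $\kappa$-directed index of size $\le\theta$ into a $\cP_\kappa\theta$-index. The role of $\kappa$-directedness is exactly to ensure that each $<\kappa$-indexed chunk $T_X$ sits inside a single satisfiable $T_j$, and the role of $|I|\le\theta$ is to let $\cP_\kappa\theta$ see all the $<\kappa$-sized subsets of $I$ via the chosen embedding. The one point to keep an eye on is that the embedding $I\hookrightarrow\theta$ need not respect the order on $I$; but this is harmless, since we only use $X\cap I$ as a set and recover directedness from $I$ itself rather than from the inclusion order on $\cP_\kappa\theta$.
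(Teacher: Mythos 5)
Your proof is correct, but it takes a genuinely different route from the paper's. The paper does not deduce the Proposition from Fact~\ref{log-comp-c-c-fact}; it re-runs the elementary-embedding argument behind that Fact: from the fine $\delta$-complete ultrafilter of Definition~\ref{sc-u-def} one forms $j:V\to\mathcal{M}$ with $\delta\leq\operatorname{crit} j\leq\kappa$ and $j(\kappa)>\theta$, covers $j``I$ by a set $A\in\mathcal{M}$ of size $<j(\kappa)$, uses $j(\kappa)$-directedness of $j(I)$ in $\mathcal{M}$ to find $i_*$ above $A$, concludes that $j``T\subseteq T^*_{i_*}$ is satisfiable in $\mathcal{M}$, and transfers satisfiability back to $V$ by absoluteness. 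Your argument instead shows the Proposition is a purely formal consequence of the Fact: pull back along an injection $I\hookrightarrow\theta$, set $T_X=\bigcup_{i\in X\cap I}T_i$ for $X\in\cP_\kappa\theta$, and use $\kappa$-directedness of $I$ together with monotonicity of the original system to trap each $T_X$ inside a single satisfiable $T_j$. All three of your verifications (monotonicity, exhaustion via singletons, satisfiability of each chunk) are sound, and your closing remark is the right one: the injection need not be order-preserving, since directedness is only ever invoked inside $I$ itself. What your reduction buys is brevity and transparency---it isolates exactly where $\kappa$-directedness and $|I|\leq\theta$ are used, and it needs nothing beyond the Fact, which the paper in any case quotes from \cite{boneymodlcs} without proof. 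What the paper's route buys is a self-contained display of the embedding technique, which is the form of the argument one would need in situations where the reindexing trick is unavailable, e.g.\ for the definable class theories mentioned after Fact~\ref{log-comp-c-c-fact}.
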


\begin{proof}
The proof follows the proof of Fact \ref{log-comp-c-c-fact} in \cite{boneymodlcs}, so we only sketch the proof here.  Given the ultrafilter $U$ from Definition \ref{sc-u-def}, we can form the resulting elementary embedding $j:V \to \mathcal{M}$ with $\delta \leq \text{crit } j \leq \kappa$ and $j(\kappa) > \theta$ such that any $\theta$-sized subset of $\mathcal{M}$ is covered by a set in $\mathcal{M}$ of size $<j(\kappa)$.

Using this property, we can find, in $\mathcal{M}$, a set $A \subset j(I)$ of size $<j(\kappa)$ that contains $j``I$.  Since $j(I)$ is $j(\kappa)$-directed in $\mathcal{M}$, there is $i_* \in j(I)$ that is above all of $A$.  Setting 
$$\{T^*_i : i \in j(I)\} = j\left(\{T_i : i \in I\}\right)$$
we have that, for all $i \in I$,
$$j(T_i) = T^*_{j(i)} \subset T^*_{i_*}$$
This means that $j``T \subset T^*_{i_*}$.  In $\mathcal{M}$, $T^*_{i_*}$ is satisfiable.  Since $\delta \leq \text{crit j}$, this is absolute between $V$ and $\mathcal{M}$.  Thus, $j``T$ is satisfiable.  This differs from $T$ only by a renaming according to $j$, so $T$ is satisfiable (see \cite{boneymodlcs} for more details on this style of argument).
\end{proof}

In the other direction, one can write down an $\mathbb{L}_{\delta, \delta}$-theory $T$ of size $\theta^{<\kappa}$ that is $<\kappa$-satisfiable and is satisfiable  precisely when there is a $\delta$-complete, fine ultrafilter on $\cP_\kappa \theta$ (this is folklore; \cite[Theorem 3.5(1)$\to$(3)]{boneymodlcs} records a more general argument).  Combining this, we arrive at the following summary:

\begin{remark}\label{c-u-sc-rem}
If $\kappa$ is $(\delta, \theta)$-compact for ultrafilters, then it is $(\delta, \theta)$-compact.  If $\theta = \theta^{<\kappa}$, then the converse holds as well.
\end{remark}

  See Hayut \cite{hayut} for more on the local implications between characterizations of compactness.

We now recall the following terminology related to accessible categories, and refer readers to \cite{adamek-rosicky} and \cite{makkai-pare} for further details:

\begin{defin} Let $\lambda$ be a regular cardinal.
	\begin{enumerate} 
		\item A diagram $D:I\to\ck$ in a category $\ck$ is \emph{$\lambda$-directed} if the underlying poset $I$ is $\lambda$-directed, i.e. any $J\subset I$ with $|J|<\lambda$ has an upper bound in $I$.
		\item An object $M$ in a category $\ck$ is \emph{$\lambda$-presentable} if the associated hom-functor
		$$\Homk(M,-):\ck\to\Set$$
		preserves $\lambda$-directed colimits.
		\item A category $\ck$ is $\lambda$-preaccessible if it contains a set $\ca$ of $\lambda$-presentable objects such that any object of $\ck$ is a $\lambda$-directed colimit of objects in $\ca$.
		\item A category $\ck$ is $\lambda$-accessible if it is $\lambda$-preaccessible and has all $\lambda$-directed colimits.  We say $\ck$ is accessible if it is $\lambda$-accessible for some $\lambda$.
	\end{enumerate}
\end{defin}

Incidentally, we will often need the following additional notion related to colimits:

\begin{defin}\label{defsmall}
	Let $\theta$ be an infinite cardinal.  We say that a diagram $D:I\to\ck$ is \emph{$\theta$-small} if $|I|<\theta$.  A colimit is $\theta$-small if its underlying diagram is $\theta$-small.
\end{defin}

\begin{defin}\label{defpresrank}
	\emph{In an accessible category $\ck$, every object $M$ is $\lambda$-presentable for some $\lambda$.  We define the \emph{presentability rank} of $M$ to be the least such $\lambda$.}
\end{defin}

\begin{remark}\label{remsharpclosed}
	\emph{Recall that accessibility (and preaccessibility) do not pass upward in an entirely straightforward fashion: while a $\lambda$-accessible category may be accessible in all regular cardinals $\mu\geq\lambda$ (i.e. it may be \emph{well accessible}, \cite[2.1]{rosbek}) in nice special cases, it will at least be accessible in a proper class of such $\mu$.  In particular, any $\lambda$-accessible category is $\mu$-accessible, $\mu>\lambda$ regular, if $\mu$ is \emph{sharply greater} than $\lambda$, denoted $\mu\sgreat\lambda$.  This sharp inequality relation, defined in \cite[2.3.1]{makkai-pare}, admits a number of equivalent characterizations (see \cite[2.11]{adamek-rosicky}), e.g. $\mu\sgreat\lambda$ if and only if any subset of cardinality $\mu$ in a $\lambda$-directed poset can be completed to a $\lambda$-directed set of cardinality $\mu$.  We note that $\sless$ is strictly stronger than $<$: by \cite[2.13(8)]{adamek-rosicky}, for example, it is not the case that $\aleph_1\sless\aleph_{\omega+1}$.}
\end{remark}

We note that this relation admits a clean characterization in terms of cardinal arithmetic.  Recall:

\begin{defin}\label{defclosed}
	Let $\lambda$ be a cardinal.  We say that a cardinal $\mu$ is \emph{$\lambda$-closed} if $\theta^{<\lambda}<\mu$ for all $\theta<\mu$.
\end{defin}

The set-theoretic content of the following seems to have been in the folklore for some time, cf. \cite{abmag}: in the form below, it is \cite[2.5]{internal-sizes-v2}.

\begin{fact}\label{factsharp}
	Let $\mu$ and $\lambda$ be regular cardinals, $\mu>\lambda$.  If $\mu$ is $\lambda$-closed, $\mu\sgreat\lambda$; when $\mu>2^{<\lambda}$, the converse holds.
\end{fact}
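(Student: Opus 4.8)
The plan is to reduce both implications to the standard combinatorial characterization of the sharp inequality recalled in Remark~\ref{remsharpclosed} (cf. also \cite[2.11]{adamek-rosicky}), which I will use in the form: $\mu\sgreat\lambda$ holds if and only if every subset of cardinality $<\mu$ of a $\lambda$-directed poset is contained in a $\lambda$-directed subset of cardinality $<\mu$; equivalently, for every set $X$ with $|X|<\mu$, the poset $[X]^{<\lambda}$ of subsets of $X$ of size $<\lambda$, ordered by inclusion, has a cofinal subset of cardinality $<\mu$. Both directions then become pure cardinal arithmetic, with the regularity of $\lambda$ and $\mu$ and the hypothesis $\mu>2^{<\lambda}$ entering at specific, identifiable points.

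For the forward implication, suppose $\mu$ is $\lambda$-closed and let $S$ be a subset of cardinality $<\mu$ of a $\lambda$-directed poset $I$. I would build an increasing, continuous chain $\seq{S_\alpha : \alpha\leq\lambda}$ of subsets of $I$ with $S_0=S$, where $S_{\alpha+1}$ is obtained from $S_\alpha$ by adjoining, for each $J\subseteq S_\alpha$ with $|J|<\lambda$, a single upper bound of $J$ in $I$ (one exists since $I$ is $\lambda$-directed). The key bookkeeping is that $|S_{\alpha+1}|\leq |S_\alpha|+|S_\alpha|^{<\lambda}=|S_\alpha|^{<\lambda}$, which stays below $\mu$ \emph{precisely} because $\lambda$-closedness gives $\kappa^{<\lambda}<\mu$ for every $\kappa<\mu$; at limit stages, a union of $<\lambda<\mu=\cof(\mu)$ sets each of size $<\mu$ again has size $<\mu$. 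Thus $|S_\lambda|<\mu$, and $S_\lambda$ is $\lambda$-directed: any $J\subseteq S_\lambda$ with $|J|<\lambda$ lies in some $S_{\alpha^*}$ with $\alpha^*<\lambda$ by regularity of $\lambda$, so it already acquires an upper bound in $S_{\alpha^*+1}\subseteq S_\lambda$. This furnishes the required completion, so $\mu\sgreat\lambda$. I note that this direction needs no separate appeal to $\mu>2^{<\lambda}$, since $\lambda$-closedness already forces $2^{<\lambda}<\mu$ (take $\theta=2$).

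For the converse, assume $\mu\sgreat\lambda$ and $\mu>2^{<\lambda}$, and fix $\theta<\mu$; I must show $\theta^{<\lambda}<\mu$. Applying the characterization to $X=\theta$, there is a cofinal subset $C\subseteq[\theta]^{<\lambda}$ with $|C|<\mu$. Since every element of $[\theta]^{<\lambda}$ is contained in some member of $C$, we have $[\theta]^{<\lambda}\subseteq\bigcup_{c\in C}\mathcal{P}(c)$, whence $\theta^{<\lambda}=|[\theta]^{<\lambda}|\leq |C|\cdot\sup_{c\in C}2^{|c|}\leq |C|\cdot 2^{<\lambda}$. Now $|C|<\mu$ and $2^{<\lambda}<\mu$, and $\mu$ is regular, so this product is $<\mu$; hence $\theta^{<\lambda}<\mu$, as desired.

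The only genuinely delicate point is the converse, and specifically the role of $\mu>2^{<\lambda}$: the sharp inequality controls the \emph{cofinality} of $[\theta]^{<\lambda}$, not its \emph{cardinality}, and these may differ. The step converting a cofinal family of size $<\mu$ into a cardinality bound loses exactly a factor of $2^{<\lambda}$, the number of subsets of a single $<\lambda$-sized cofinal element, so without $\mu>2^{<\lambda}$ the bound $\theta^{<\lambda}<\mu$ can fail even while $\mu\sgreat\lambda$ holds (for instance $\lambda=\aleph_1$, $\mu=\aleph_{\omega+1}$, $\theta=\aleph_\omega$ under suitable instances of SCH). I would therefore flag this conversion as the crux and take care to invoke the regularity of $\mu$ to absorb $|C|\cdot 2^{<\lambda}$ below $\mu$.
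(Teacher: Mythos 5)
The paper offers no proof of Fact~\ref{factsharp}: it is quoted as folklore, with a pointer to \cite[2.5]{internal-sizes-v2}, so the comparison here is with the standard argument from the literature, which your proof essentially reproduces. Both implications are proved correctly: you reduce $\sgreat$ to the combinatorial characterization of \cite[2.11]{adamek-rosicky} (in both its poset-completion and cofinal-subset forms, which are indeed equivalent for regular $\lambda<\mu$), obtain the forward direction by a closing-off chain of length $\lambda$---$\lambda$-closedness bounding the successor stages, regularity of $\mu$ handling limits, regularity of $\lambda$ giving directedness of the final union---and obtain the converse by covering $[\theta]^{<\lambda}$ with the powersets of members of a cofinal family of size $<\mu$. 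Two cosmetic points: the identity $\theta^{<\lambda}=|[\theta]^{<\lambda}|$ holds only when $\theta\geq\lambda$, so the trivial cases $2\leq\theta<\lambda$, where $\theta^{<\lambda}\leq 2^{<\lambda}<\mu$ directly, should be split off; and absorbing $|C|\cdot 2^{<\lambda}$ below $\mu$ uses nothing beyond the fact that a product of two cardinals $<\mu$ is their maximum---regularity of $\mu$ is genuinely needed in the forward direction, not there.

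The one genuine mistake is your closing parenthetical example, and the paper itself flags why: by Remark~\ref{remsharpclosed}, quoting \cite[2.13(8)]{adamek-rosicky}, the relation $\aleph_1\sless\aleph_{\omega+1}$ fails outright in ZFC, under any hypotheses whatsoever. Indeed, by your own cofinality analysis it would require a cofinal subfamily of $[\aleph_\omega]^{<\aleph_1}$ of cardinality at most $\aleph_\omega$, and none exists: splitting such a family into pieces $F_n$ with $|F_n|\leq\aleph_n$ and choosing, for each $n$, a point of $\aleph_\omega$ outside $\bigcup F_n$ produces a countable set contained in no member of the family. So your triple $\lambda=\aleph_1$, $\mu=\aleph_{\omega+1}$, $\theta=\aleph_\omega$ is never a case where $\mu\sgreat\lambda$ holds while $\lambda$-closedness fails. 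A correct witness to the necessity of $\mu>2^{<\lambda}$ lies much closer to home: take $\lambda=\aleph_1$, $\mu=\aleph_2$, $\theta=\aleph_1$ in any model of $\neg$CH. Then $\mu\sgreat\lambda$ holds, since for any regular $\lambda$ one has $\lambda\sless\lambda^+$ (the initial segments of $\lambda$ form a cofinal family of size $\lambda$ in $[\lambda]^{<\lambda}$, because regularity of $\lambda$ bounds every subset of size $<\lambda$; smaller $\theta$ are trivial), while $\aleph_1^{\aleph_0}\geq 2^{\aleph_0}\geq\aleph_2=\mu$ shows $\mu$ is not $\lambda$-closed. This same pair is also the cleanest illustration of your conceptual point about the gap between cofinality and cardinality: under $\neg$CH, $[\aleph_1]^{\leq\aleph_0}$ has cofinality $\aleph_1$ but cardinality at least $2^{\aleph_0}$. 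None of this affects the validity of your proof of the Fact as stated, which asserts only the two implications, but the necessity remark should be repaired or dropped.
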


\begin{defin}\label{functsdef}
	Let $\lambda$ be a regular cardinal.
	\begin{enumerate}
		\item We say that a functor $F:\ck\to\cl$ is $\lambda$-accessible if $\ck$ and $\cl$ are $\lambda$-accessible, and $F$ preserves $\lambda$-directed colimits.
		\item\label{accemb-functsdef} Given a subcategory $\ck$ of a category $\cl$, we say $\ck$ is $\lambda$-accessibly embedded in $\cl$ if it is full and $\ck$ is closed under $\lambda$-directed colimits in $\cl$.
	\end{enumerate}
\end{defin}

The central mission of the current paper, following \cite{BT-R} and \cite{lcatth}, is to determine how close the \emph{powerful image} of an accessible functor $F:\ck\to\cl$ comes to being accessibly embedded in $\cl$, provided we work below a large cardinal $\kappa$.  Recall:

\begin{defin}\label{defimgs}
Let $F:\ck\to\cl$ be a functor.
\begin{enumerate}
	\item The \emph{full image of $F$} is the full subcategory of $\cl$ on objects $FA$, $A\in\ck$.  For brevity, we denote the full image of $F$ by $\calf(F)$.
	\item\label{powim-def} The \emph{powerful image of $F$}, denoted $\cp(F)$, is the closure of $\calf(F)$ under $\cl$-subobjects\footnote{We note that it is sometimes useful to take the \emph{$\lambda$-pure powerful image} of $F$, $\cp_\lambda(F)$, namely the closure of $\calf(F)$ under \emph{$\lambda$-pure} (mono)morphisms. Indeed, \cite{BT-R} focuses on this notion, and treats the powerful image as secondary (e.g. \cite[3.4]{BT-R}).  We take the opposite---less technically demanding---approach, indicating in Remark~\ref{rmklpure} how our argument can be easily modified to yield analogous results for $\lambda$-pure powerful images.}: for any $M$ in $\cl$, if there is an $\cl$-monomorphism $M\to N\in\calf(F)$, $M$ is in $\cp(F)$.
\end{enumerate}	
\end{defin}

As mentioned above, we will be concerned with accessible functors \emph{below} particular large cardinals, a notion which we now make precise through a series of definitions: 

\begin{remark}\label{rmkmu}
	\emph{We recall an important parameter, $\mu_{\ck,\lambda}$, associated with a $\lambda$-accessible category $\ck$ (\cite[3.1]{BT-R}).  We proceed as follows:
	\begin{enumerate}
		\item For any cardinal $\beta$ and regular cardinal $\lambda$, define $\gamma_{\beta,\lambda}$ to be the least cardinal $\gamma$ with $\gamma\geq\beta$ and $\gamma\sgeq\lambda$.  Define $\mu_{\beta,\lambda}$, in turn, by
		$$\mu_{\beta,\lambda}=(\gamma_{\beta,\lambda}^{<\gamma_{\beta,\lambda}})^+.$$
		\item If $\ck$ is a $\lambda$-accessible category, let $\mu_{\ck,\lambda}$ be $\mu_{\beta,\lambda}$, where $\beta$ is the cardinality of $\pres{\lambda}{\ck}$, i.e. a full subcategory on a set of representatives of the isomorphism classes of $\lambda$-presentable objects.
		\item As we have seen, any accessible category $\ck$ is $\lambda$-accessible for many $\lambda$: we write $\mu_\ck$ to mean $\mu_{\ck,\lambda}$, where $\lambda$ is the least such cardinal.
	\end{enumerate}
	By design, $\mu_{\ck,\lambda}\sgeq\gamma_{\ck,\lambda}\sgeq\lambda$.  As we will see in Section~\ref{seccomppow}, this parameter gives a bound on the linguistic resources needed to describe $\ck$ as a well-behaved class of structures.}
\end{remark}

We define a related parameter associated with each accessible functor:

\begin{defin}\label{defmufunct}
Let $F:\ck\to\cl$ be an accessible functor, and let $\lambda$ be the least cardinal such that $F$ is $\lambda$-accessible.  Define $\mu_F$ to be the least regular cardinal $\mu\geq\mu_{\cl,\lambda}$ such that $F$ preserves $\mu$-presentable objects; that is, if $M$ is $\mu$-presentable in $\ck$, $F(M)$ is $\mu$-presentable in $\cl$.
\end{defin}

\begin{defin}\label{deffunctbelow}
	Let $\kappa$ be a cardinal.  We say that an accessible functor $F:\ck\to\cl$ is \emph{below} $\kappa$ if $\mu_F<\kappa$.  We say that a particular accessible category $\ck$ is \emph{below} $\kappa$ if $\mu_\ck<\kappa$.
\end{defin}

We also review a few of the ideas we need in connection with abstract elementary classes (AECs).  First defined in \cite{shelahaecs}, AECs are a semantic (or, if you like, category-theoretic) abstraction of well-behaved nonelementary classes of models and embeddings, such as those axiomatizable in ${\mathbb L}_{\lambda,\omega}$.  Naturally, they also generalize elementary classes from finitary first-order logic $\bL_{\omega, \omega}$.  Crucially, AECs retain, in the structure of their class of designated embeddings, certain essential properties of these more elementary cousins, particularly those proved without any appeal to compactness: downward L\"{o}wenheim-Skolem, the Tarski-Vaught chain, etc.  For our purposes, it is enough, perhaps, to recall that an AEC $\ck$ has arbitrary directed colimits of $\ck$-embeddings, and that it has an associated L\"owenheim-Skolem number $\lsnk$ such that any object in $\ck$ is a $\lsnk^+$-directed colimit of $\lsnk^+$-presentable objects.  Here the presentability rank of an object $M\in\ck$ is precisely $|UM|^+$, i.e. the successor of the cardinality of the underlying set of $M$.

\begin{nota}\label{notationforgetful}
\emph{Whenever we work in an AEC (or, indeed, any concrete category) $\ck$, we denote by $U$ the functor $U:\ck\to\Set$ that sends each object to its underlying set.} 
\end{nota}

In AECs, the syntactic types familiar from first-order model theory are replaced by \emph{Galois} (or \emph{orbital}) \emph{types}, a notion that makes sense even in a general concrete category.  We adopt the more category-theoretic formulation introduced in \cite{LRclass}, where we identify types in a concrete category $\ck$ with equivalence classes of pairs of the form $(f,a)$, where $f:M\to N$ and $a\in UN$.  In particular, we say that pairs $(f_1,a_1)$ and $(f_2,a_2)$ are equivalent if the pointed span 
$$a_1\in N_1\stackrel{f_1}{\leftarrow} M\stackrel{f_2}{\to} N_2\ni a_2$$
can be extended to a commutative square 
$$  
      \xymatrix{
        N_1 \ar[r]^{g_1} & N \\
        M \ar [u]^{f_1} \ar [r]_{f_2} &
        N_2 \ar[u]_{g_2}
      }
    $$
with $g_1(a_1)=g_2(a_2)$.  Transitivity of this relation is not automatic: one could simply take its transitive closure, but this is somewhat unwieldy from a technical perspective.  If, on the other hand, we assume that $\ck$ has the amalgamation property, i.e. any span $N_1\leftarrow M\to N_2$ can be completed to a commutative square, this relation is already transitive.  For the sake of simplicity, we adopt the latter approach.\footnote{In fact, the arguments of this paper will also go through in the former, more technical, case, with a slight modification to Definition~\ref{deftame}, and a bit of extra bookkeeping.}

The notion of \emph{tameness} of Galois types in a concrete category $\ck$---essentially the requirement that equivalence of of pairs $(f_1,a_1)$ and $(f_2,a_2)$ over any $M$ is determined by restrictions to subobjects of $M$ of some fixed small size---was first isolated in \cite{grovantame}, and has come to play an important role in the development of the classification theory of AECs.  We consider several parameterizations of this notion, again presenting a category-theoretic formulation inspired by \cite[5.1]{LRclass}.

\begin{defi}\label{deftame}
	Let $\ck$ be a concrete category, and let $\kappa\leq\theta$.
	\begin{enumerate}
		\item We say that $\ck$ is \emph{$(<\kappa,\theta)$-tame} if the following holds for every $\theta^+$-presentable object $M$: given any $(f_1,a_1)$ and $(f_2,a_2)$ over $M$, if $(f_1\chi,a_1)$ is equivalent to $(f_2\chi,a_2)$ for all $\chi:X\to M$ with $X$ $\kappa$-presentable, then $(f_1,a_1)$ and $(f_2,a_2)$ are equivalent.
		\item We say $\ck$ is \emph{$<\kappa$-tame} if it is $(<\kappa,\theta)$-tame for all $\theta\geq\kappa$.
	\end{enumerate}
\end{defi}

In light of the correspondence between cardinality and presentability in an AEC $\ck$ (see the comment preceding \ref{notationforgetful} above), these clearly reduce to the standard definitions in that context, cf. \cite[11.6]{baldwinbk}.

\section{Compactness and powerful images}\label{seccomppow}

We are now in a position to state the main result connecting $(\delta,\theta)$-compactness to the closure properties of powerful images.  First, though, we establish the broader context for this result, encompassing \cite{BT-R}, \cite{lcatth}, and, more concretely, the free abelian group example considered in the introduction.

In connection with the latter, we note again that the free functor
$$F:\Set\to\Ab$$
is finitely accessible (i.e. $\omega$-accessible), and that $\cp(F)=\calf(F)=\FrAb$, the full subcategory of $\Ab$ in the free abelian groups.  We have the following:

\begin{remark}\label{rmkfreeab} \emph{Let $F:\Set\to\Ab$ be as above, $\kappa$ an infinite cardinal.
\begin{enumerate}
	\item $\FrAb$ is $\kappa$-preaccessible in $\Ab$, i.e. any $G\in\FrAb$ is a $\kappa$-directed colimit in $\Ab$ of $<\kappa$-presentable objects.  In particular, $G$ is a $\kappa$-directed colimit of its $<\kappa$-presented subgroups, which, being subgroups of a free abelian group, must be free abelian.
	\item If $\kappa$ is strongly compact, $\FrAb$ is closed under $\kappa$-directed colimits in $\Ab$, i.e. it is $\kappa$-accessibly embedded in $\Ab$.  Notice that $\kappa\sgreat\omega$ (this holds, in fact, for any uncountable regular cardinal), so $\Ab$ is $\kappa$-accessible.  By (1), and the fact that $\FrAb$ is $\kappa$-accessibly embedded in $\Ab$, it follows that $\FrAb$ is $\kappa$-accessible.
\end{enumerate}}
\end{remark}

As we noted in the introduction, the closure in Remark~\ref{rmkfreeab}(2) genuinely depends on the large cardinal character of $\kappa$---if, for example, we take the extreme position that $V=L$, we can arrange a counterexample to closure under $\kappa$-directed colimits for any $\kappa$.  The cumulative result of the generalizations of this example---\cite[5.5.1]{makkai-pare}, \cite[3.3]{lcatth}, and particularly the proof of \cite[3.2]{BT-R}---is a clear suggestion that large cardinal properties translate cleanly to closure conditions on powerful images of accessible functors.  We summarize those results here:

\begin{fact}\label{factlcsumm} \emph{
	Let $\kappa$ be an infinite cardinal, $F:\ck\to\cl$ an accessible functor below $\kappa$.
	\begin{enumerate}
	\item Without any large cardinal assumptions whatsoever, the powerful image of $F$, $\cp(F)$, is $\kappa$-preaccessible in $\cl$: any object $M$ in $\cp(F)$ is a $\kappa$-directed colimit of $\kappa$-presentable objects in $\cl$ that lie in $\cp(F)$.
	\item If $\kappa$ is an almost [measurable/strongly compact] cardinal, $\cp(F)$ is closed under [colimits of $\kappa$-chains/$\kappa$-directed colimits] in $\cl$.
	\end{enumerate} }
\end{fact}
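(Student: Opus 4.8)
The plan is to treat the two parts separately, since (1) is a ZFC decomposition argument while (2) is where the large-cardinal hypothesis does the real work. Throughout I use that, because $F$ is below $\kappa$ (Definitions~\ref{defmufunct} and \ref{deffunctbelow}), we have $\kappa > \mu_F \sgeq \mu_{\cl,\lambda}\sgeq\lambda$, so that by the design of these parameters $\kappa\sgreat\lambda$; hence $\ck$ and $\cl$ are $\kappa$-accessible, $F$ preserves $\kappa$-directed colimits (it preserves $\lambda$-directed ones and $\kappa\geq\lambda$), and $F$ sends $\kappa$-presentables to $\kappa$-presentables. For part (1), I would first pin down the $\kappa$-presentable objects of $\cp(F)$: an object $S$ is $\kappa$-presentable and lies in $\cp(F)$ precisely when it admits an $\cl$-monomorphism $S\hookrightarrow FA$ with $A$ $\kappa$-presentable in $\ck$. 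Indeed, given any mono $S\hookrightarrow FA'$ with $S$ $\kappa$-presentable, write $A'=\colim_j A_j$ as a $\kappa$-directed colimit of $\kappa$-presentables, so $FA'=\colim_j FA_j$; the mono factors through some $FA_j$, and a left factor of a monomorphism is a monomorphism, so $S\hookrightarrow FA_j$. I would then present an arbitrary $M\in\cp(F)$, with chosen mono $m:M\hookrightarrow FA$, as a $\kappa$-directed colimit of such pieces: decomposing $A$ and applying $F$ yields $M\hookrightarrow FA=\colim_j FA_j$, and I would approximate $M$ by the subobjects of $M$ that it cuts out against the $FA_j$, each of which is $\kappa$-presentable and, being a subobject of $M\hookrightarrow FA$, lies in $\cp(F)$.

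The main obstacle in (1) is precisely this last move. A general accessible category need not have pullbacks or a well-behaved (strong epi, mono)-factorization, so ``the part of $M$ cut out by $FA_j$'' has to be produced by hand: I would pass to the canonical $\kappa$-filtered diagram of $\kappa$-presentable objects over $M$, compose into $FA$, factor through the $FA_j$, and take images in $\cl$. The care lies in checking that these images remain $\kappa$-presentable (using closure of $\kappa$-presentables under the relevant small colimits) and that the resulting system is $\kappa$-directed with colimit $M$. This is the technical, though entirely large-cardinal-free, heart of (1), and is the content of \cite[3.2]{BT-R}.

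For part (2), given a $\kappa$-directed colimit $M=\colim_{i\in I}M_i$ with each $M_i\in\cp(F)$, hence $M_i\hookrightarrow FA_i$, the goal is a single monomorphism $M\hookrightarrow FB$. I would run the elementary-embedding argument of \cite{lcatth} and \cite{LRclass}: almost strong compactness yields $j:V\to\cm$ with $\delta\leq\text{crit } j\leq\kappa$ (some $\delta<\kappa$), $j(\kappa)>|I|$, and the covering property that $j``I$ is contained in a subset $A\subseteq j(I)$ of size $<j(\kappa)$; as $j(I)$ is $j(\kappa)$-directed in $\cm$, $A$ has an upper bound $i_*$. By elementarity the object $(jD)_{i_*}$ of the transported diagram lies in $\cp(jF)$, the diagram maps $(jD)_{j(i)}\to(jD)_{i_*}$ assemble—using that $\cl$ is concrete, so that $a\mapsto j(a)$ supplies comparison maps $M_i\to j(M_i)=(jD)_{j(i)}$—into a cocone under $\{M_i\}$, and the universal property of the colimit in $V$ produces $M\to(jD)_{i_*}\hookrightarrow(jF)(B')=FB'$. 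The measurable variant is the same with a $\kappa$-complete ultrafilter on $\kappa$ and $\kappa$-chains $\colim_{\alpha<\kappa}M_\alpha$ in place of arbitrary $\kappa$-directed colimits, the weaker covering property of a measurable accounting for the restriction to chains.

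The hard part of (2) is that the composite $M\to FB'$ must be shown to be a monomorphism, and this is exactly where the large-cardinal strength is indispensable: injectivity of the comparison map out of the $\kappa$-directed colimit rests on the $\delta$-completeness and fineness packaged in the ultrafilter/embedding, and it is this step that fails under $V=L$, as the free-abelian-group example of Remark~\ref{rmkfreeab} illustrates. An equivalent, more model-theoretic route encodes the diagram together with the embeddings $M_i\hookrightarrow FA_i$ as a $\cP_\kappa I$-filtered union of satisfiable theories and invokes compactness (Fact~\ref{log-comp-c-c-fact}, Proposition~\ref{categorified-prop}) to satisfy the whole, reading off $B$ and the embedding from the resulting model; this is the form we will ultimately sharpen in Theorems~\ref{thmcptclosed} and \ref{thmcptclosednok}.
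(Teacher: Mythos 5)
Both halves of your proposal run aground on the same missing idea: before any decomposition or compactness argument can be run, the powerful image must be re-encoded so that the monomorphism data lives \emph{inside the objects}, rather than being chased around $\cl$. The proofs this Fact summarizes (\cite[3.2]{BT-R}, \cite[3.3]{lcatth}), and the paper's own refinements in Theorems~\ref{thmcptclosed} and~\ref{thmcptclosednok}, first form the category $\cm$ of $\cl$-monomorphisms $L\to FK$ (the pullback of the comma category $(\Id\downarrow F)$ along $\Mono{\cl}\hookrightarrow\cl^{\to}$), show $\cm$ is $\mu_F$-accessible, observe that $\cp(F)$ is the full image of the forgetful functor $H:\cm\to\cl$, and then embed everything syntactically, so that $\cp(F)$ becomes the reduct class $\Red{\Sigma}{\Sigma_\cl}{T}$ of models of an $\Ll_{\mu_F,\mu_F}$-theory $T$. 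For your part (1) this matters because your construction---approximate $M$ by the canonical diagram of $\kappa$-presentables and ``take images in $\cl$''---demands exactly the factorizations you correctly note a general accessible category lacks; the canonical cocone maps into $M$ are not monomorphisms, so nothing places the approximating objects in $\cp(F)$, and deferring to \cite[3.2]{BT-R} does not repair this, since that proof never forms an image: it decomposes objects of $\cm$ (each of which carries its own mono with it) and pushes the decomposition forward along $H$, which preserves the relevant colimits and presentables. Two smaller slips: $\kappa>\mu_F$ does not by itself give $\kappa\sgreat\lambda$ (the paper derives $\mu_F\sless\kappa$ from inaccessibility of $\kappa$; sharp inequality is strictly stronger than inequality), and the right-to-left direction of your ``precisely when'' fails, since a subobject of $FA$ with $A$ $\kappa$-presentable need not itself be $\kappa$-presentable.

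In part (2) the same omission breaks the argument you actually develop: you apply $j$ elementwise to objects of $\cl$ and invoke comparison maps $M_i\to j(M_i)$ ``using that $\cl$ is concrete,'' but no concreteness is given---$\cl$ is an arbitrary accessible category, so there is no map $M_i\to j(M_i)$, no elementwise meaning for $j$ of the diagram, and no pointwise way to verify that the induced map $M\to (jD)_{i_*}$ is a monomorphism (that verification proceeds by applying elementarity of $j$ to statements about \emph{elements} of the directed system, which only makes sense for structures). This is precisely what the syntactic reduction buys: once $\cp(F)$ is identified with $\Red{\Sigma}{\Sigma_\cl}{T}$, membership of $M$ is equivalent to satisfiability of $T'=T\cup T_M$, where $T_M$ is the atomic diagram of $M$; this theory is naturally filtered by the subtheories $T\cup T_{Di}$, each satisfiable because $Di$ itself witnesses it, and the large cardinal enters only to transfer satisfiability to the union, via compactness or via the embedding argument of Proposition~\ref{categorified-prop}. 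Your closing sentences gesture at this theory-based route, but as an ``equivalent'' alternative to an argument whose main body cannot be carried out as written; at this level of generality it is not an alternative but the required path, and the content you leave unspecified---which theory to write down, why its pieces are satisfiable, and why a model yields the mono out of $M$---is exactly where the proof lives.
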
 

For the remainder of this section, we will use the machinery of the aforementioned papers to determine the closure conditions corresponding to $(\delta,\theta)$-compactness of $\kappa$.  First, we give the version for strong compactness (as in Definition \ref{defparamcpt}).

\begin{theo}\label{thmcptclosed}
	Let $\kappa$ be $(\delta,\theta)$-compact.  For any accessible functor $F:\ck\to\cl$ below $\delta$, its powerful image is
	\begin{enumerate}
	\item $\kappa$-preaccessible in $\cl$, and
	\item closed in $\cl$ under $\theta^+$-small $\kappa$-directed colimits of $\kappa$-presentable objects.
	\end{enumerate}
\end{theo}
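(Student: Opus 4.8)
The plan is to dispatch part (1) immediately and to treat part (2) by translating membership in $\cp(F)$ into a satisfiability problem for a theory in $\Ll_{\delta,\delta}$, to which $(\delta,\theta)$-compactness can be applied. For (1): since $\delta\le\kappa$ and $F$ is below $\delta$, we have $\mu_F<\delta\le\kappa$, so $F$ is \emph{a fortiori} below $\kappa$; hence (1) is exactly the content of Fact~\ref{factlcsumm}(1), which needs no large-cardinal hypothesis.

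For (2), let $M=\colim_{i\in I}M_i$ be a $\theta^+$-small (so $|I|\le\theta$) $\kappa$-directed colimit, computed in $\cl$, of $\kappa$-presentable objects $M_i$, each lying in $\cp(F)$; fix for each $i$ a monomorphism $m_i:M_i\to F(A_i)$ with $A_i\in\ck$. Since $\mu_F<\delta$, I would invoke the representation machinery behind \cite{BT-R} (ultimately \cite{makkai-pare}) to present $\cl$ as a class of $\tau$-structures with $\cl$-monomorphisms corresponding to $\tau$-embeddings, $|\tau|<\delta$, and to produce an $\Ll_{\delta,\delta}(\tau)$-theory $S$ of size $<\delta$ whose models are exactly the underlying $\tau$-structures of the objects of the full image $\calf(F)$ (the data of a preimage $A\in\ck$ being definable because $F$ is accessible below $\delta$). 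Adjoining constants for $UM$ and letting $\Delta(M)$ be the atomic diagram of $M$, membership $M\in\cp(F)$ becomes precisely satisfiability of $T:=S\cup\Delta(M)$: a model is an object of $\calf(F)$ together with a $\tau$-embedding, i.e.\ an $\cl$-monomorphism, from $M$.

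It then remains to verify the two hypotheses of $(\delta,\theta)$-compactness for $T$. For \emph{$<\kappa$-satisfiability}: any $T_0\subseteq T$ with $|T_0|<\kappa$ mentions fewer than $\kappa$ constants (using that $\kappa$, being inaccessible, is regular), each lying in the image of some cocone leg; $\kappa$-directedness of $I$ lifts these to a single stage $i_*$, so that $T_0\subseteq S\cup\Delta(M_{i_*})$ transported along $M_{i_*}\to M$ (an embedding), and this is satisfied by $F(A_{i_*})$ via $m_{i_*}$. For the \emph{size bound} $|T|\le\theta$: I would write $\Delta(M)=\bigcup_{i\in I}\Delta(M_i)$; each $M_i$ is $\kappa$-presentable, hence $<\kappa$-generated with underlying set of size $<\kappa$, so by strong inaccessibility of $\kappa$ its atomic $\Ll_{\delta,\delta}(\tau)$-diagram has size $<\kappa$, and the union over the $\le\theta$ stages gives $|\Delta(M)|\le\theta\cdot\kappa=\theta$; with $|S|<\delta\le\theta$ this yields $|T|\le\theta$ (pad to exactly $\theta$ by trivial sentences if needed). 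Applying $(\delta,\theta)$-compactness, $T$ is satisfiable, so $M\in\cp(F)$.

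I expect the size bound to be the main obstacle and the crux of the whole argument: it is exactly here that the two hypotheses of (2)---that the colimit be $\theta^+$-small and that its objects be $\kappa$-presentable---are indispensable. They allow one to bound each stage's diagram locally, via inaccessibility, and to control the number of stages, so that $|T|\le\theta$ \emph{without} recourse to a global arithmetic assumption such as $\theta^{<\kappa}=\theta$; as flagged in the introduction, it is precisely these hypotheses that the later ultrafilter formulation (through Proposition~\ref{categorified-prop}) will let us discard.
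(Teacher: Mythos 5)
Your skeleton is the paper's own: part (1) from the ZFC-only preaccessibility result, and part (2) by converting ``$M$ admits a monomorphism into the image'' into a satisfiability problem for (a theory describing the image) $\cup$ (the atomic diagram of $M$), checking $<\kappa$-satisfiability by pushing any $<\kappa$-sized fragment into a single stage of the $\kappa$-directed diagram, bounding the whole theory by $\theta$, and applying $(\delta,\theta)$-compactness; your size computation ($|\Delta(M)|\le\theta\cdot\kappa=\theta$, each stage contributing $<\kappa$ formulas by inaccessibility of $\kappa$) is a sound substitute for the paper's appeal to \cite[2.3.11]{makkai-pare}. However, there is one genuine error, and it sits at the heart of your encoding: there is in general \emph{no} theory $S$ in $\Ll_{\delta,\delta}(\tau)$, in the same signature $\tau$ used to represent $\cl$, whose models are exactly the $\tau$-structures underlying the objects of $\calf(F)$. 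Full images of accessible functors are \emph{pseudo-elementary} (reduct) classes, not elementary ones---this distinction is the entire reason powerful images are subtle. The paper's motivating example already refutes your claim: for the free functor $F:\Set\to\Ab$, such an $S$ would make $\FrAb$ the model class of an $\Ll_{\delta,\delta}$-theory in the signature of groups; but (consistently, e.g.\ under $V=L$) for arbitrarily large regular $\kappa$ there exist non-free, strongly $\kappa$-free groups, and these are $\Ll_{\infty,\kappa}$-equivalent---hence $\Ll_{\delta,\delta}$-equivalent---to free groups, so they would have to lie in $\FrAb$. Your parenthetical hope that ``the data of a preimage $A\in\ck$'' is definable in $\tau$ is exactly what fails: that data must be carried by new sorts and symbols, not defined away.

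The repair is precisely what the machinery you invoke actually delivers, and what the paper does: axiomatize the category $\cm$ of monomorphisms $L\to FK$, $K\in\ck$, by a theory $T$ in an \emph{expanded} signature $\Sigma\supseteq\Sigma_\cl$, so that $\cp(F)$ corresponds to the reduct class $\Red{\Sigma}{\Sigma_\cl}{T}$ of $\Sigma_\cl$-reducts of models of $T$, rather than to a model class in $\Sigma_\cl$. With $S$ replaced by $T$, your argument goes through essentially verbatim: a model of $T\cup\Delta(M)$ is a model of $T$ whose $\Sigma_\cl$-reduct receives a $\Sigma_\cl$-embedding (equivalently, an $\cl$-monomorphism) from $M$, which puts $M$ in $\cp(F)$; and in the $<\kappa$-satisfiability check the witnessing model is not $F(A_{i_*})$ itself but its expansion to a $\Sigma$-structure modeling $T$, encoding $A_{i_*}$ and $m_{i_*}$. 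So the flaw is localized and repairable---but as written, the theory on which your entire part (2) is built does not exist.
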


\begin{proof}
	We adhere closely to the argument for \cite[3.2]{BT-R} (although we consider the powerful, rather than the $\lambda$-pure powerful, image, and we argue via logical compactness rather than ultraproducts---see Remark~\ref{rmklpure}).  In fact, we will condense the first steps of the aforementioned argument, in which the powerful image of the functor $F$---say, for definiteness, that it is $\lambda$-accessible---is encoded first as the full image of the forgetful functor
$$H:\cm\to\cl$$
where $\cm$ is the category of monomorphisms $L\to FK$, $K\in\ck$; or, to be more precise, $\cm$ is the pullback of the comma category $(Id\downarrow F)$ in $\cl^{\to}$ along the inclusion $\Mono{\cl}\hookrightarrow \cl^{\to}$, $\cl^{\to}$ the usual arrow category of $\cl$.  From the pullback characterization, the category $\cm$ is $\mu_F$-accessible, and $H$ preserves $\lambda$-directed colimits.  

We now use the syntactic characterizations of accessible categories to simplify the problem further.  In short, the categories $\cl$ and $\cm$ can be fully embedded in categories of many-sorted finitary structures $\Str(\Sigma_\cl)$ and $\Str(\Sigma_\cm)$, respectively, and $\cm$ can be identified with a subcategory $\Mod(T)$ of $\Str(\Sigma_\cm)$ consisting of models of a $\Ll_{\mu_F,\mu_F}(\Sigma_\cm)$-theory $T$.  This can also be considered as a theory in signature $\Sigma=\Sigma_\cl\coprod\Sigma_\cm$, with $E:\Mod(T)\to\Str(\Sigma)$ the corresponding embedding.  Consider the reduct functor $R:\Mod(T)\stackrel{E}{\to}\Str(\Sigma)\to\Str(\Sigma_\cl)$.  The full image of $R$, which is precisely $\Red{\Sigma}{\Sigma_\cl}{T}$, the subcategory of reducts of models of $T$ to $\Sigma_\cl$, is equivalent to the full image of $H$.  

Notice that the functor $R$ preserves $\mu_F$-directed colimits and $\mu_F$-presentable objects.  Since $\kappa$ is, in particular, inaccessible, and since $\mu_F<\delta\leq\kappa$, it follows that $\mu_F\sless\kappa$. By design, then, $R$ preserves $\kappa$-directed colimits and $\kappa$-presentable objects.  This means, in particular, that every $M$ in $\Red{\Sigma}{\Sigma_\cl}{T}$ is a $\kappa$-directed colimit of $\kappa$-presentable structures in $\Str(\Sigma_\cl)$ that lie in $\Red{\Sigma}{\Sigma_\cl}{T}$.  That is, $\Red{\Sigma}{\Sigma_\cl}{T}$ is $\kappa$-preaccessible in $\Str(\Sigma_\cl)$, which gives us clause (1) of the theorem.  

As in \cite{BT-R} and \cite{lcatth}---and Fact~\ref{factlcsumm}(2)---the precise nature of the large cardinal comes into play only now, in ensuring the closure of $\Red{\Sigma}{\Sigma_\cl}{T}$ in $\Str(\Sigma_\cl)$ under colimits of diagrams of a particular size or shape---in our case, $\kappa$-directed diagrams of size at most $\theta$ and consisting of  $\kappa$-presentable objects.

To that end, let $D:I\to \Red{\Sigma}{\Sigma_\cl}{T}$ be such a diagram, and let $M$ be its colimit in $\Str(\Sigma_\cl)$.  To show that $M$ is in $\Red{\Sigma}{\Sigma_\cl}{T}$, it suffices to show that $M$ admits a monomorphism into an object of $\Red{\Sigma}{\Sigma_\cl}{T}$.  We may axiomatize monomorphisms out of $M$ by its atomic diagram; that is, the collection of all atomic and negated atomic formulas in $L_{\lambda\lambda}(\Sigma_M)$, where $\Sigma_M$ is $\Sigma_\cl$ with the addition of new constant symbols $c_m$ for each $m\in UM$.  So, in fact, it suffices to exhibit a model of the theory $T'=T\cup T_M$ in the new signature $\Sigma_M$---in terms of logical complexity, this theory lives in $\Ll_{\mu_F,\mu_F}(\Sigma_M)$, hence also in $\Ll_{\delta,\delta}(\Sigma_M)$.

Because it is a $\theta^+$-small, $\kappa$-directed colimit of $\kappa$-presentable objects, $M$ is $\theta^+$-presentable in $\Str(\Sigma_\cl)$ by \cite[2.3.11]{makkai-pare}.  Thus, $|UM|\leq\theta$.  So $|T'|\leq\theta$, and we are ideally positioned to make use of $(\delta,\theta)$-compactness of $\kappa$.

Let $\Gamma\subseteq T'$, $|\Gamma|<\kappa$.  In particular, $\Gamma\subseteq T\cup T_M^{\Gamma}$, where $T_M^{\Gamma}\subseteq T_M$ is of cardinality less than $\kappa$ and, by a simple counting argument, involves fewer than $\kappa$ of the new constants $c_m$.  As $\kappa$-directed colimits in $\Str(\Sigma_\cl)$ are concrete, $UM$ is the $\kappa$-directed union of the $UDi$, $i\in I$.  By $\kappa$-directedness, some particular $UDi$ will contain all of the relevant interpretations $c_m^M$ in $UM$ of the symbols in $T_M^{\Gamma}$.  Since $Di$ is in $\Red{\Sigma}{\Sigma_\cl}{T}$, and the identity (mono)morphism $id_{Di}:Di\to Di$ witnesses that $Di\models T_M^{\Gamma}$, we have that $Di\models \Gamma$.  

By $(\delta,\theta)$-compactness of $\kappa$, then, $T'$ has a model, and we are done.
\end{proof}

\begin{remark}\label{rmklpure} \emph{
	\begin{enumerate}
	\item Both \cite{BT-R} and \cite{lcatth} use ultrafilter characterizations of large cardinals, and carry out the final step of the argument with an ultraproduct construction.  
	\item In \cite{BT-R}, the authors are concerned with the $\lambda$-pure powerful image, which requires two minor modifications.  First, we define $\cm$ in a similar fashion, but insisting the morphisms be $\lambda$-pure. Second, when attempting to construct a $\lambda$-pure morphism from $M$ in the final step, we must use its \emph{$\lambda$-pure diagram}, consisting of positive primitive (and negated positive primitive) formulas, see \cite[p.~9]{BT-R}.
	\end{enumerate}
 }
\end{remark}

Second, we give the analogous result for compactness for ultrafilters.

\begin{theo}\label{thmcptclosednok}
	Let $\kappa$ be  $(\delta,\theta)$-compact for ultrafilters.  For any accessible functor $F:\ck\to\cl$ below $\delta$, $\cp$ is 
	\begin{enumerate}
	\item $\kappa$-preaccessible in $\cl$, and
	\item closed in $\cl$ under $\theta^+$-small $\kappa$-directed colimits.
	\end{enumerate}
\end{theo}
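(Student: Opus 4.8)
The plan is to run the argument for Theorem~\ref{thmcptclosed} essentially verbatim, diverging only at the very last step, where the filtration-based compactness supplied by Proposition~\ref{categorified-prop} lets us drop the restriction to $\kappa$-presentable objects. First I would reproduce the encoding unchanged: present $\cp(F)$ as the full image of the reduct functor $R:\Mod(T)\to\Str(\Sigma_\cl)$, so that $\cp(F)$ is identified with $\Red{\Sigma}{\Sigma_\cl}{T}$, where $T$ is an $\Ll_{\mu_F,\mu_F}(\Sigma_\cm)$-theory, hence an $\Ll_{\delta,\delta}$-theory since $\mu_F<\delta$. As before, $\mu_F\sless\kappa$ because $\kappa$ is inaccessible and $\mu_F<\delta\le\kappa$, so $R$ preserves $\kappa$-directed colimits and $\kappa$-presentable objects; this yields clause~(1) exactly as in Theorem~\ref{thmcptclosed}, requiring no hypothesis on $\kappa$ beyond inaccessibility.

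For clause~(2), let $D:I\to\Red{\Sigma}{\Sigma_\cl}{T}$ be a $\theta^+$-small $\kappa$-directed diagram---crucially, now with \emph{no} assumption that its objects are $\kappa$-presentable---and let $M$ be its colimit in $\Str(\Sigma_\cl)$, with colimit injections $f_i:Di\to M$. As in the earlier proof, it suffices to exhibit a monomorphism from $M$ into an object of $\Red{\Sigma}{\Sigma_\cl}{T}$, and this is equivalent to finding a model of $T'=T\cup T_M$, where $T_M$ is the atomic diagram of $M$ in the signature $\Sigma_M$ obtained by adjoining a constant $c_m$ for each $m\in UM$. The essential point is that, having dropped the presentability restriction, we can no longer bound $|UM|$ by $\theta$, so $|T'|$ may exceed $\theta$ and the cardinality-based $(\delta,\theta)$-compactness used in Theorem~\ref{thmcptclosed} is unavailable. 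This is exactly the gap that compactness for ultrafilters, in the form of Proposition~\ref{categorified-prop}, is designed to close.

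Accordingly, I would present $T'$ as an increasing union indexed by $I$ itself. For $i\in I$ set
$$T'_i = T\cup\{\varphi\in T_M : \text{every constant occurring in }\varphi\text{ names an element of }f_i(UDi)\}.$$
Since $i\le j$ forces $f_i(UDi)\subseteq f_j(UDj)$, the family $\{T'_i\}_{i\in I}$ is monotone; and because $\Sigma_\cl$ is finitary, each $\varphi\in T_M$ involves only finitely many constants, so---the colimit being concrete, $UM=\bigcup_i f_i(UDi)$---directedness of $I$ gives $\bigcup_{i\in I}T'_i=T'$. Each $T'_i$ is satisfiable, witnessed by $Di$ (expanded to $\Sigma$ and with the relevant constants interpreted via preimages under $f_i$): the lift of $Di$ to a model of $T$ exists since $Di\in\Red{\Sigma}{\Sigma_\cl}{T}$, and the atomic-diagram clauses in $T'_i$ hold because $f_i$ is an embedding---directed colimits in $\Str(\Sigma_\cl)$ are computed on underlying sets with embeddings as transition maps---hence reflects atomic and negated-atomic formulas. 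Finally, $\theta^+$-smallness of $D$ means $|I|\le\theta$, and $I$ is $\kappa$-directed, so Proposition~\ref{categorified-prop} applies and yields a model of $T'$, completing the proof.

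I expect the main obstacle to be purely a matter of bookkeeping: organizing $T'$ as a genuinely $I$-indexed, monotone filtration whose pieces are satisfiable, which hinges on the colimit injections being embeddings so that the atomic diagram transfers back along $f_i$. Once this filtration is in place, Proposition~\ref{categorified-prop} does all the work, and the conceptual content is simply the observation that filtration-based compactness replaces the cardinality control that the $\kappa$-presentability hypothesis previously supplied.
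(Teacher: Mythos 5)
Your proposal is correct and follows essentially the same route as the paper: run the proof of Theorem~\ref{thmcptclosed} unchanged up to the final step, then replace the cardinality-based compactness argument by filtrating $T'=T\cup T_M$ along the index poset $I$ and invoking Proposition~\ref{categorified-prop}. Your filtration $T'_i$ is exactly the paper's $T'_i := T\cup T_{Di}$ (with $\Sigma_{Di}$ viewed as a subsignature of $\Sigma_M$), just written out with the identification along the cocone maps $f_i$---and the fact that these transfer atomic and negated atomic formulas---made explicit rather than implicit.
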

\begin{proof}
The proof proceeds almost exactly as in the proof of Theorem \ref{thmcptclosed}.  The only difference comes in the final two paragraphs (beginning with ``Because it is a $\theta^+$-small, $\kappa$-directed colimit of $\kappa$-presentable objects...''), as we now drop the assumption that the diagram consists of $\kappa$-presentable objects.  Here we assume only that $M$ is the colimit of a $\theta^+$-small, $\kappa$-directed diagram $D$ from $\Red{\Sigma}{\Sigma_\cl}{T}$.

Note that dropping the size restriction means that we no longer have any control over the size of the theory $T'$, which was crucial in the argument above.  Nonetheless, the diagram $D$ gives us a way to filtrate $T'$, indexed by $I = \dom D$.  In particular, given $i \in I$, set $T'_i:=T \cup T_{Di}$ in the signature $\Sigma_{Di}$ (viewed as a subsignature of $\Sigma_M$).  Then the satisfiability of $T'_i$ is witnessed by $Di$.  By the compactness for ultrafilters of $\kappa$ and Proposition \ref{categorified-prop}, $T'$ is satisfiable and we are done.
\end{proof}



\section{Powerful images and tameness}\label{sectame}

We now recast equivalence of Galois types in terms of powerful images, which will allow us to derive tameness from closure properties of the sort derived in Theorem~\ref{thmcptclosed}.  The approach is just as in \cite[5.2]{LRclass}, \cite[\S5]{LiRo17}, and \cite{lcatth}.  In particular, we work at the level of generality used in \cite[4.8]{lcatth}, focusing not on AECs but on \emph{concretely accessible categories}:

\begin{defin}\label{defconcracc}
	A \emph{concretely $\lambda$-accessible category} consists of a $\lambda$-accessible category $\ck$ and a faithful functor $U:\ck\to\Set$ that is $\lambda$-accessible and preserves $\lambda$-presentable objects.
\end{defin}

We begin by encoding equivalence of types in the form of the powerful image of a particular accessible functor---while we give the basic outline of the argument here, the details are the same as in the corresponding tameness and locality proofs mentioned above.\\

\begin{defin}\label{deftamecats} Let $\ck$ be a concretely $\lambda$-accessible category.
\begin{enumerate}
\item We define $\ck^<$ to be the category of pairs of (representatives of) types $(f_i,a_i)$, $i=0,1$, i.e. spans
			$$  
      \xymatrix{
        N_0  &  \\
        M \ar [u]^{f_0} \ar [r]_{f_1} &
        N_1 
      }
    $$
    with selected elements $a_i\in UN_i$, $i=0,1$.
    \item We denote by $\ck^\square$ the category of squares witnessing the equivalence of such pairs, i.e. commutative squares
    $$  
      \xymatrix{
        N_0 \ar[r]^{g_0} & N \\
        M \ar [u]^{f_0} \ar [r]_{f_1} &
        N_1 \ar[u]_{g_1}
      }
    $$
    with selected elements $a_i\in UN_i$, $i=0,1$, and $U(g_0)(a_0)=U(g_1)(a_1)$.  
    \item Let $F:\ck^\square\to\ck^<$ be the obvious forgetful functor.
    \end{enumerate}
    \end{defin}
    
    \begin{remark}\label{rmktamecats} \emph{
    \begin{enumerate}
    \item The full image of $F$, which is itself powerful---by inspection---consists of precisely the equivalent pairs $(f_1,a_1)$, $(f_2,a_2)$.
    \item By \cite[4.7]{lcatth}, $\ck^\square$ and $\ck^<$ are both $\lambda$-accessible.  Moreover, $F$ is $\lambda^+$-accessible, and preserves $\nu$-presentable objects for arbitrary $\nu\sgreat\lambda$.  
    \item\label{rmktamecatsbelow} Following the proof of  \cite[2.3]{lcatth}, we note that if $\lambda<\delta$, $\delta$ inaccessible, one can easily verify that $\mu_{\ck^<}=\mu_\ck<\delta$ and, in turn, that $\mu_F<\delta$.	
    \end{enumerate} }
    \end{remark}
    
    \begin{theo}\label{thmclosedtame}
    	Let $\delta\leq\kappa$ be inaccessible cardinals, and $\theta$ such that $\theta^+\sgeq\kappa$.  Suppose that every accessible functor $F$ below $\delta$ has powerful image closed under $\theta^+$-small $\kappa$-directed colimits.  Then any concretely $\lambda$-accessible category $\ck$, $\lambda<\delta$, is $(<\kappa,\theta)$-tame.
    \end{theo}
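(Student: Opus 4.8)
The plan is to apply the closure hypothesis to the forgetful functor $F:\ck^\square\to\ck^<$ of Definition~\ref{deftamecats}. By Remark~\ref{rmktamecats}(2) this $F$ is accessible, and by Remark~\ref{rmktamecats}(3), since $\lambda<\delta$ with $\delta$ inaccessible, $\mu_F<\delta$, so $F$ is below $\delta$. The hypothesis therefore tells us that $\cp(F)$ is closed in $\ck^<$ under $\theta^+$-small $\kappa$-directed colimits; and by Remark~\ref{rmktamecats}(1) the full image of $F$ is already powerful, so $\cp(F)=\calf(F)$ is exactly the full subcategory of \emph{equivalent} pairs. Thus $(<\kappa,\theta)$-tameness amounts to showing: if a pair over a $\theta^+$-presentable $M$ lands, after every $\kappa$-presentable restriction, in $\calf(F)$, then the pair itself lies in $\calf(F)$. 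I will obtain this by exhibiting the pair as a $\theta^+$-small $\kappa$-directed colimit of its restrictions.

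To set this up, fix a $\theta^+$-presentable object $M$ and a pair $(f_1,a_1),(f_2,a_2)$ over $M$, with $f_i:M\to N_i$ and $a_i\in UN_i$, satisfying the tameness hypothesis. First I note that $\ck$ is $\kappa$-accessible: since $\lambda<\kappa$ and $\kappa$ is inaccessible, $\kappa$ is $\lambda$-closed, hence $\lambda\sless\kappa$ by Fact~\ref{factsharp}, and a $\lambda$-accessible category is $\kappa$-accessible whenever $\kappa\sgreat\lambda$ (Remark~\ref{remsharpclosed}). Next, using $\theta^+\sgeq\kappa$ together with the standard theory of presentable objects (\cite[2.3.11]{makkai-pare}), I write $M$ as a $\kappa$-directed colimit $M=\colim_{i\in I}X_i$ of $\kappa$-presentable objects $X_i$ with $|I|\le\theta$; write $\chi_i:X_i\to M$ for the colimit legs and $x_{ij}:X_i\to X_j$ for the transition maps.

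Now I assemble these restrictions into a diagram in $\ck^<$. For each $i$ let $P_i$ be the pair
$$N_1\xleftarrow{\,f_1\chi_i\,}X_i\xrightarrow{\,f_2\chi_i\,}N_2$$
with the same selected elements $a_1\in UN_1$, $a_2\in UN_2$; for $i\le j$ the triple $(\id_{N_1},x_{ij},\id_{N_2})$ is a morphism $P_i\to P_j$ in $\ck^<$, so $i\mapsto P_i$ is a diagram indexed by the $\kappa$-directed poset $I$ with $|I|\le\theta$, i.e. a $\theta^+$-small $\kappa$-directed diagram. Colimits in $\ck^<$ are formed legwise: the two outer legs are constant at $N_1$ and $N_2$ over the (connected) index $I$, the middle leg is the chosen presentation of $M$, and the selected elements are unchanged, so $\colim_{i\in I}P_i$ is precisely the original pair $(f_1,a_1),(f_2,a_2)$ over $M$.

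Finally, each $X_i$ is $\kappa$-presentable, so the tameness hypothesis gives that $(f_1\chi_i,a_1)$ is equivalent to $(f_2\chi_i,a_2)$; that is, $P_i\in\calf(F)=\cp(F)$ for every $i$. As $\cp(F)$ is closed under $\theta^+$-small $\kappa$-directed colimits in $\ck^<$, the colimit lies in $\cp(F)=\calf(F)$, i.e. $(f_1,a_1)$ and $(f_2,a_2)$ are equivalent, which is the desired conclusion. The one genuinely load-bearing step is the small presentation $M=\colim_{i\in I}X_i$ with $|I|\le\theta$: this is where the hypothesis $\theta^+\sgeq\kappa$ enters, and one must also check that this presentation is inherited legwise by $\ck^<$ so that the colimit of the $P_i$ really recovers the pair over $M$. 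Everything else is bookkeeping in the type-pair categories of Definition~\ref{deftamecats}.
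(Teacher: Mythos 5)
Your proposal is correct and follows essentially the same route as the paper's own proof: the same functor $F:\ck^\square\to\ck^<$, the same verification via Remark~\ref{rmktamecats} that $F$ is below $\delta$, the same presentation of $M$ as a $\theta^+$-small $\kappa$-directed colimit of $\kappa$-presentable objects via \cite[2.3.11]{makkai-pare}, and the same conclusion by applying the closure hypothesis to the resulting diagram of spans. Your additional bookkeeping---the explicit transition morphisms $(\id_{N_1},x_{ij},\id_{N_2})$ and the legwise computation of the colimit in $\ck^<$ over the connected index poset---merely spells out what the paper asserts as ``clearly'' the colimit of the diagram of spans.
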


\begin{proof}
    
    Let $\ck$ be a concretely $\lambda$-accessible category, with $\lambda<\delta$, and consider $\ck^\square$, $\ck^<$, and $F:\ck^\square\to\ck^<$, as in Definition~\ref{deftamecats}. By Remark~\ref{rmktamecats}(\ref{rmktamecatsbelow}), $F$ is below $\delta$, so the closure hypothesis in the theorem holds for the powerful image of $F$: it is closed under $\theta^+$-small $\kappa$-directed colimits of $\kappa$-presentable objects.
    
    By inaccessibility of $\kappa$, and the fact that $\lambda<\mu_F<\delta\leq\kappa$, we must have $\lambda\sless\kappa$.  Thus $\ck$ is a concretely $\kappa$-accessible category.  Moreover, the assumption on $\theta$ ensures, following the proof of Corollary~\ref{corthetaplus}, that $\theta^+\sgreat\kappa$.  
    
    Let $M$ be a $\theta^+$-presentable object in $\ck$, and consider $(f_i,a_i)$, $i=1,2$, with $f_i:M\to N_i$, $a_i\in UN_i$.  Suppose that $(f_1\chi,a_1)$ and $(f_2\chi,a_2)$ are equivalent for any $\chi:X\to M$, $X$ $\kappa$-presentable.  By \cite[2.3.11]{makkai-pare}, $M$ is the colimit of a $\theta^+$-small $\kappa$-directed colimit of $\kappa$-presentable objects, say with cocone $(\phi_i:M_i\to M\,|\,i\in I)$, $|I|\leq\theta$.  So, in particular, $(f_1\phi_i,a_1)$ and $(f_2\phi_i,a_2)$ are equivalent for all $i\in I$.
    
    Hence the spans 
    $$  
      \xymatrix{
        N_1  &  \\
        M_i \ar [u]^{f_1\phi_i} \ar [r]_{f_2\phi_i} &
        N_2 
      }
    $$
    all belong to the powerful image of $F$.  As the original span
    $$  
      \xymatrix{
        N_1  &  \\
        M \ar [u]^{f_1} \ar [r]_{f_2} &
        N_2 
      }
    $$
    is clearly the $\kappa$-directed colimit of the $\theta^+$-small diagram of such spans, it belongs to the powerful image as well.  So $(f_1,a_1)$ and $(f_2,a_2)$ are equivalent, meaning that $\ck$ is $(<\kappa,\theta)$-tame.
    \end{proof}
    
\begin{remark}\label{rmkcpttamemods}\emph{\begin{enumerate}
	\item If we assume, as we do in the ensuing results of this paper, that $\theta^{<\kappa}=\theta$, then $\theta^+\sgeq\kappa$ and we are in the situation of the theorem.
	\item While we here assume closure under arbitrary $\theta^+$-small, $\kappa$-directed colimits, purely for the sake of simplicity, we only need closure under such colimits \emph{in which the objects are $\kappa$-presentable}.  The proof of this stronger result requires slightly more bookkeeping, as the spans in the powerful image produced above need not have $N_1$ and $N_2$ $\kappa$-presentable, meaning that the spans, as written, need not be $\kappa$-presentable, in which case the closure condition is of no help.  One can remedy this by representing $N_1$ and $N_2$ as $\theta^+$-small $\kappa$-directed diagrams of $\kappa$-presentable objects, and collating the individual diagrams into a diagram of spans of the appropriate kind: we omit the details.
	\item We note that this stronger assumption implies a separation principle for Galois types that may be of independent interest: two types over an object $M$ are equal just in case for any $\kappa$-directed, $\theta^+$-small diagram $D:I\to\ck$ with colimit $M$, the types in question have have the same restrictions to each $D(i)$.  This generalizes \emph{locality} for Galois types, which is defined in terms of \emph{chains}, rather than directed systems.  Although this notion seems to be a natural one (especially from a categorical viewpoint), it has not seen any use in the AEC literature and we will not develop it further here.\end{enumerate}}
\end{remark}
    
\begin{remark}\label{rmkaecstame} \emph{We note that any AEC $\ck$ is a concretely $\lsnk^+$-accessible category---and likewise for the more general $\mu$-AECs of \cite{mu-aec-jpaa}---so the hypotheses of Theorem~\ref{thmclosedtame} will imply $(<\kappa,\theta)$-tameness of all AECs with $\lsnk<\delta$.}	
\end{remark}

\section{Characterization Theorem}\label{secequiv}

We now build to the main result, Theorem~\ref{thmcptequiv}, which gives the promised equivalent category- and model-theoretic characterizations of $(\delta,\theta)$-compactness.  We begin by summarizing what we have proven so far:

\begin{thm}\label{thmcptimpl} 
Let $\delta\leq\kappa$ be inaccessible cardinals, and $\theta^{<\kappa}=\theta$.  Each of the following statements implies the next:
\begin{enumerate}
	\item $\kappa$ is $(\delta,\theta)$-compact.
	\item If $F:\ck\to\cl$ is an accessible functor below $\delta$, $\cp(F)$ is closed in $\cl$ under $\theta^+$-small $\kappa$-directed colimits.
	\item Every AEC $\bK$ with $\LS(\bK)<\delta$ is $(<\kappa, \theta)$-tame.
\end{enumerate}
\end{thm}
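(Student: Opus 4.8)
The plan is to treat the two implications $(1)\Rightarrow(2)$ and $(2)\Rightarrow(3)$ separately, since each is essentially a matter of assembling results already established in the preceding sections together with the standing cardinal-arithmetic hypothesis $\theta^{<\kappa}=\theta$. Nothing new needs to be proved from scratch; the work lies entirely in threading the hypotheses correctly, and in particular in invoking $\theta^{<\kappa}=\theta$ at exactly the right place.

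For $(1)\Rightarrow(2)$, I would \emph{not} attempt to apply Theorem~\ref{thmcptclosed} directly: logical $(\delta,\theta)$-compactness yields, via that theorem, only closure under $\theta^+$-small $\kappa$-directed colimits \emph{of $\kappa$-presentable objects}, which is strictly weaker than the unrestricted closure demanded in clause (2). Instead I would first upgrade the hypothesis. By Remark~\ref{c-u-sc-rem}, the assumption $\theta^{<\kappa}=\theta$ supplies the converse implication there, so that $(\delta,\theta)$-compactness of $\kappa$ in fact entails $(\delta,\theta)$-compactness for ultrafilters. With compactness for ultrafilters in hand, Theorem~\ref{thmcptclosednok} applies verbatim to any accessible functor $F:\ck\to\cl$ below $\delta$, giving that $\cp(F)$ is closed in $\cl$ under arbitrary $\theta^+$-small $\kappa$-directed colimits, which is precisely clause (2).

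For $(2)\Rightarrow(3)$, I would invoke Theorem~\ref{thmclosedtame}. Its main hypothesis is exactly statement (2), namely that every accessible functor below $\delta$ has powerful image closed under $\theta^+$-small $\kappa$-directed colimits; its side condition $\theta^+\sgeq\kappa$ is furnished by Remark~\ref{rmkcpttamemods}(1) from $\theta^{<\kappa}=\theta$. Theorem~\ref{thmclosedtame} then delivers $(<\kappa,\theta)$-tameness for every concretely $\lambda$-accessible category with $\lambda<\delta$. To descend to AECs I would use Remark~\ref{rmkaecstame}: an arbitrary AEC $\bK$ is concretely $\LS(\bK)^+$-accessible, and since $\delta$ is inaccessible (hence a limit cardinal), the assumption $\LS(\bK)<\delta$ gives $\LS(\bK)^+<\delta$, so the previous step applies with $\lambda=\LS(\bK)^+$. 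Thus every AEC with $\LS(\bK)<\delta$ is $(<\kappa,\theta)$-tame.

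I expect the only genuinely delicate point to be the first step of $(1)\Rightarrow(2)$: recognizing that the naive route through Theorem~\ref{thmcptclosed} falls short by exactly the $\kappa$-presentability restriction, and that this gap is closed precisely by trading cardinality-based compactness for the filtration-based (ultrafilter) compactness underlying Theorem~\ref{thmcptclosednok}. This is where the hypothesis $\theta^{<\kappa}=\theta$ does its real work; everywhere else it merely secures the sharp-inequality side conditions. No large-cardinal or category-theoretic content beyond the cited results should be required.
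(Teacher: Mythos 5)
Your proposal is correct and follows exactly the paper's own proof: $(1)\Rightarrow(2)$ via Remark~\ref{c-u-sc-rem} (where $\theta^{<\kappa}=\theta$ upgrades compactness to compactness for ultrafilters) and Theorem~\ref{thmcptclosednok}, and $(2)\Rightarrow(3)$ via Theorem~\ref{thmclosedtame} together with Remarks~\ref{rmkcpttamemods}(1) and \ref{rmkaecstame}. Your observation that the naive route through Theorem~\ref{thmcptclosed} falls short by the $\kappa$-presentability restriction is precisely the point the paper itself makes when motivating the ultrafilter formulation in Section~\ref{prelims}.
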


\begin{proof} 

$(1) \implies (2)$ follows from Remark~\ref{c-u-sc-rem} and Theorem~\ref{thmcptclosednok}.  The implication $(2)\implies (3)$ is simply \ref{thmclosedtame} (in conjunction with Remarks~\ref{rmkcpttamemods}(1) and \ref{rmkaecstame}). We note that $(1)\implies (3)$ was proven independently as \cite[4.5]{boneylarge}.

\end{proof}

Note that we do not have a perfect equivalence in Theorem~\ref{thmcptequiv}.  The following would close the loop, but the cardinal arithmetic is just off (since $\delta^{(\theta^{<\kappa})} \geq 2^\theta > \theta$).

\begin{fact}[{\cite[Theorem 4.9(3)]{boun}}]\label{bu-str-fact}
If every AEC $\bK$ with $\LS(\bK) =\delta$ is $(<\kappa, \delta^{(\theta^{<\kappa})})$-tame, then $\kappa$ is $(\delta^+, \theta)$-compact for ultrafilters.
\end{fact}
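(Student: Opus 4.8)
The plan is to argue contrapositively: assuming $\kappa$ is \emph{not} $(\delta^+,\theta)$-compact for ultrafilters, I will construct an AEC $\bK$ with $\LS(\bK)=\delta$ that fails to be $(<\kappa,\delta^{(\theta^{<\kappa})})$-tame. By Definition~\ref{sc-u-def}, the failure of compactness for ultrafilters means there is no fine, $\delta^+$-complete ultrafilter on $\cP_\kappa\theta$. By Fact~\ref{log-comp-c-c-fact}, applied with $\delta^+$ in place of $\delta$, this is witnessed by an $\Ll_{\delta^+,\delta^+}(\tau)$-theory $T = \cup_{s\in\cP_\kappa\theta} T_s$ in which each local piece $T_s$ is satisfiable but the global theory $T$ is not. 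This theory is the combinatorial seed from which the construction grows.

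Next, following the strategy of \cite{sh932} and \cite{boun}, I would build an AEC $\bK$ whose objects are \emph{certificates of local satisfiability} indexed by $\cP_\kappa\theta$: roughly, a model records a subset $s\subseteq\theta$ together with a model of $T_s$ and the bookkeeping needed to glue such pieces coherently along the $\kappa$-directed order of $\cP_\kappa\theta$. The signature and the axioms cutting out $\bK$ can be arranged inside $\Ll_{\delta^+,\delta^+}$, and since each satisfiable $T_s$ has a model of size $\delta$ (by downward Löwenheim--Skolem for $\Ll_{\delta^+,\delta^+}$), one obtains $\LS(\bK)=\delta$. The number of coordinates is $|\cP_\kappa\theta| = \theta^{<\kappa}$, so a full model assembling one piece of size $\delta$ per coordinate has cardinality $\delta^{(\theta^{<\kappa})}$---this is exactly where the size bound in the statement originates.

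The heart of the argument is then to exhibit, over such a full model $M$ (of presentability rank $(\delta^{(\theta^{<\kappa})})^+$), two Galois types that agree on every $<\kappa$-presentable restriction yet are globally inequivalent. Local agreement is forced by satisfiability of the $T_s$: any restriction of $M$ to a $<\kappa$-sized subobject involves only $<\kappa$ coordinates, which by $\kappa$-directedness of $\cP_\kappa\theta$ lie below a single $s$, and the model of $T_s$ supplies an amalgam equating the two types over that restriction. Global inequivalence, on the other hand, is precisely the unsatisfiability of $T$: a square in the sense of Section~\ref{sectame} witnessing equivalence over all of $M$ would collate the local models into a single model of $T = \cup_s T_s$, which does not exist. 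This directly contradicts $(<\kappa,\delta^{(\theta^{<\kappa})})$-tameness.

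I expect the main obstacle to be the faithfulness of the encoding. One must verify that $\bK$ is genuinely an AEC---closure under directed colimits, coherence, and (in the type formulation) amalgamation---that the two types really do restrict identically on every $\kappa$-presentable subobject, and that their equivalence over $M$ is \emph{equivalent} to the existence of a global model of $T$, with no spurious degrees of freedom in either direction. Pinning down the cardinal arithmetic so that $\LS(\bK)=\delta$ while the separating model has size exactly $\delta^{(\theta^{<\kappa})}$ is delicate, and is the reason the conclusion gives compactness at $\delta^+$ rather than $\delta$; the careful tracking of these bounds is the substance of \cite[Theorem 4.9(3)]{boun}.
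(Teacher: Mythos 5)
First, note that this paper does not prove the statement at all: it is imported verbatim as a Fact from \cite[Theorem 4.9(3)]{boun}, so your attempt has to be measured against the proof given there. Your opening move---passing, via the contrapositive of Fact~\ref{log-comp-c-c-fact}, from the nonexistence of a fine $\delta^+$-complete ultrafilter on $\cP_\kappa\theta$ to a filtrated theory $T=\bigcup_{s\in\cP_\kappa\theta}T_s$ with every piece satisfiable but $T$ not---is legitimate. Everything after it, however, is a wish-list rather than a construction, and two of the wishes conflict with known facts. The class $\bK$ is never defined; you only stipulate that it ``can be arranged inside $\Ll_{\delta^+,\delta^+}$'' with $\LS(\bK)=\delta$. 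But classes of models of $\Ll_{\delta^+,\delta^+}$-theories, under infinitary-elementary (or any comparably definable) strong substructure, are in general only $\delta^+$-AECs in the sense of \cite{mu-aec-jpaa}: the Tarski--Vaught union-of-chains axiom genuinely fails for such classes, and that axiom is precisely what separates honest AECs---which the statement is about---from $\mu$-AECs. Likewise, your appeal to ``downward L\"owenheim--Skolem for $\Ll_{\delta^+,\delta^+}$'' to get $\LS(\bK)=\delta$ is unsound: $\Ll_{\delta^+,\delta^+}$-sentences can force every model to have size at least $2^\delta$ (for instance, by demanding that one sort code all subsets of a $\delta$-sized sort), so the L\"owenheim--Skolem number of this logic exceeds $\delta$. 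These are not cosmetic points: the completeness degree $\delta^+$ in the conclusion is calibrated exactly to $\LS(\bK)=\delta$, and the actual work in \cite{boun} consists of building, by hand and following \cite{sh932}, a purely combinatorial class of structures whose AEC axioms (including the union axiom, with L\"owenheim--Skolem number $\delta$) are verified directly.

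Second, your pivotal step---that a square witnessing equivalence of the two types over $M$ ``would collate the local models into a single model of $T$''---has no mechanism behind it, and there is a structural obstruction to supplying one. Inside your full model $M$, the models of the various $T_s$ must be mutually incoherent: if membership in $\bK$ forced them to glue compatibly along $\cP_\kappa\theta$, then $M$ itself could not exist, because $T$ is unsatisfiable. Consequently an amalgam $N$ with $g_1(a_1)=g_2(a_2)$ is just one more structure of the same kind; identifying two distinguished elements does nothing to assemble the pairwise-unrelated $T_s$-pieces of $M$ into a model of $\bigcup_s T_s$. Arranging the class so that type-equality data really does produce such a gluing is the entire difficulty, and the proof in \cite{boun} sidesteps it by running in the positive direction: from the assumed tameness of a combinatorially defined AEC, applied to types over a model of size $\delta^{(\theta^{<\kappa})}$ built from all functions from $\cP_\kappa\theta$ into a $\delta$-sized set (this is where that cardinal bound comes from), one reads off, set by set, a fine $\delta^+$-complete ultrafilter on $\cP_\kappa\theta$; no model of an unsatisfiable theory is ever required. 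So your proposal is not the cited argument with details omitted: the omitted items---the definition of $\bK$, the verification of the AEC axioms with $\LS(\bK)=\delta$, and the equivalence of global type-equality with satisfiability of $T$---are the entire content of the theorem, and at least two of them, as you have set things up, cannot be delivered.
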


At the very least, this allows us to give a nice characterization at $\kappa$-closed, strong limit cardinals.  Adopting the notational convention that a class is $(\mu, <\chi)$-tame if it is $(\mu, \chi_0)$-tame for all $\mu \leq \chi_0<\chi$, and that a diagram is $<\theta$-small if it is $\nu$-small for some $\nu<\theta$, we have:

\begin{thm}\label{thmcptequiv} 
Let $\delta\leq\kappa$ be inaccessible cardinals, and $\theta$ be a $\kappa$-closed strong limit cardinal. The following are equivalent:
\begin{enumerate}
	\item $\kappa$ is $(\delta,<\theta)$-compact.
	\item If $F:\ck\to\cl$ is $\lambda$-accessible, $\mu_\cl<\delta$, and preserves $\mu_\cl$-presentable objects, then the inclusion $\cp(F)\hookrightarrow\cl$ is strongly $\kappa$-accessible and $\cp(F)$ is closed under $<\theta$-small $\kappa$-directed colimits.
	\item Any AEC $\bK$ with $\lsnk<\delta$ is $(<\kappa, <\theta)$-tame.
\end{enumerate}
\end{thm}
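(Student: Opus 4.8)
The plan is to run the cycle $(1)\Rightarrow(2)\Rightarrow(3)\Rightarrow(1)$, lifting the one-directional implications of Theorem~\ref{thmcptimpl} to the ``$<\theta$'' setting levelwise and closing the loop with Fact~\ref{bu-str-fact}. Throughout, the two standing hypotheses on $\theta$ do the arithmetical work: $\kappa$-closedness gives $(\theta')^{<\kappa}<\theta$ for every $\theta'<\theta$, idempotency of $<\kappa$-closure, $((\theta')^{<\kappa})^{<\kappa}=(\theta')^{<\kappa}$, lets me replace any $\theta'<\theta$ by a $<\kappa$-closed $\theta''=(\theta')^{<\kappa}<\theta$, and the strong-limit property keeps even $\delta^{\theta''}$ below $\theta$. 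For $(1)\Rightarrow(2)$: the preaccessibility clause (``strongly $\kappa$-accessible'' inclusion) needs no large cardinal and is clause (1) of Theorem~\ref{thmcptclosednok}. For closure, let $D\colon I\to\cp(F)$ be a $<\theta$-small $\kappa$-directed diagram, so $|I|<\theta$; put $\theta''=(|I|+\kappa)^{<\kappa}<\theta$, so $(\theta'')^{<\kappa}=\theta''$ and $|I|\le\theta''$. By $(1)$, $\kappa$ is $(\delta,\theta'')$-compact, hence by Remark~\ref{c-u-sc-rem} (using $\theta''=(\theta'')^{<\kappa}$) also $(\delta,\theta'')$-compact for ultrafilters; as $F$ is below $\delta$, Theorem~\ref{thmcptclosednok}(2) gives closure under $(\theta'')^+$-small $\kappa$-directed colimits, and $D$ is such a diagram, so $\colim D\in\cp(F)$.

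For $(2)\Rightarrow(3)$: by Remark~\ref{rmkaecstame} it suffices to treat concretely $\lambda$-accessible $\ck$ with $\lambda<\delta$ and to establish $(<\kappa,\theta_0)$-tameness for each $\theta_0$ with $\kappa\le\theta_0<\theta$. Since tameness at one level descends to smaller levels (a $\theta_0^+$-presentable object is $\theta_1^+$-presentable when $\theta_0\le\theta_1$), I first replace $\theta_0$ by $\theta_1=(\theta_0)^{<\kappa}<\theta$, which is $<\kappa$-closed, so $\theta_1^+\sgeq\kappa$ by Fact~\ref{factsharp} (cf.\ Remark~\ref{rmkcpttamemods}(1)). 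As $\theta$ is a limit cardinal, $\theta_1^+<\theta$, so clause $(2)$ furnishes closure of powerful images of functors below $\delta$ under $\theta_1^+$-small $\kappa$-directed colimits; Theorem~\ref{thmclosedtame} then yields $(<\kappa,\theta_1)$-tameness, hence $(<\kappa,\theta_0)$-tameness. Ranging over $\theta_0$ gives $(<\kappa,<\theta)$-tameness.

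For $(3)\Rightarrow(1)$, the one genuinely new direction and the only place Fact~\ref{bu-str-fact} enters: fix $\theta'<\theta$; it is enough to produce $(\delta,\theta')$-compactness, and by monotonicity I may pass to $\theta''=(\theta')^{<\kappa}<\theta$, which is $<\kappa$-closed. I would invoke Fact~\ref{bu-str-fact} at the level whose output is $\delta$-indexed compactness for ultrafilters on $\cP_\kappa\theta''$; its hypothesis is $(<\kappa,\delta^{(\theta''^{<\kappa})})$-tameness of the associated AEC. The key observation is that, because $\theta$ is a strong limit with $\delta,\theta''<\theta$, the level $\delta^{(\theta''^{<\kappa})}=\delta^{\theta''}$ remains $<\theta$, so this tameness is exactly an instance guaranteed by $(<\kappa,<\theta)$-tameness in $(3)$. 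Feeding it to Fact~\ref{bu-str-fact} produces compactness for ultrafilters, and Remark~\ref{c-u-sc-rem} (again via $(\theta'')^{<\kappa}=\theta''$) converts this to $(\delta,\theta'')$-, hence $(\delta,\theta')$-, compactness; as $\theta'<\theta$ was arbitrary, $\kappa$ is $(\delta,<\theta)$-compact.

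The hard part will be the parameter bookkeeping in this last step. Fact~\ref{bu-str-fact} as stated links $\LS=\delta$ tameness with $(\delta^+,\cdot)$-compactness, whereas $(3)$ supplies tameness only for $\LS<\delta$ and $(1)$ asks for $(\delta,\cdot)$-compactness with $\delta$ inaccessible, hence a limit and never a successor. Making these indices line up---choosing the level at which the Fact is applied so that the tameness it demands falls under the strictly-below-$\delta$ clause of $(3)$, while its compactness conclusion still dominates the target $(\delta,\cdot)$-compactness, and checking the required $\delta$-completeness is genuinely obtained rather than a cofinal family of lower completenesses---is the delicate heart of the argument. It is precisely here that the strong-limit, $\kappa$-closed nature of $\theta$ is indispensable: it both keeps $\delta^{(\theta''^{<\kappa})}$ beneath $\theta$ (absorbing the ``$\delta^{(\theta^{<\kappa})}\ge 2^\theta>\theta$'' slack flagged before Fact~\ref{bu-str-fact}) and makes the $<\kappa$-closures used throughout land strictly below $\theta$.
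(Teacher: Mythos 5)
Your implications $(1)\Rightarrow(2)$ and $(2)\Rightarrow(3)$ are correct and coincide with the paper's route: the paper's entire proof is the instruction to combine Theorem~\ref{thmcptimpl} with Fact~\ref{bu-str-fact} via Remark~\ref{c-u-sc-rem}, and your levelwise reductions---replacing an arbitrary $\theta'<\theta$ by $\theta''=(\theta')^{<\kappa}<\theta$, using $\kappa$-closedness of $\theta$, idempotence of $(\cdot)^{<\kappa}$ for regular $\kappa$, and the strong-limit property to keep $\delta_0^{\theta''}$ below $\theta$---are exactly the bookkeeping the paper leaves implicit in Theorems~\ref{thmcptclosednok} and \ref{thmclosedtame}.

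The problem is $(3)\Rightarrow(1)$, and there the defect you flag as ``the delicate heart'' is not bookkeeping that can be made to come out right: the plan as you describe it cannot be completed. Fact~\ref{bu-str-fact} converts tameness of AECs with $\lsnk=\delta_0$ into $(\delta_0^+,\theta'')$-compactness for ultrafilters, and clause $(3)$ licenses its application only at levels $\delta_0<\delta$. Since $\delta$ is inaccessible, hence a limit, every admissible application yields completeness $\delta_0^+<\delta$; there is no level ``whose compactness conclusion still dominates the target $(\delta,\cdot)$-compactness,'' because no $\delta_0<\delta$ has $\delta_0^+\geq\delta$. Nor can the outputs be aggregated: knowing that for every $\delta'<\delta$ there is a fine $\delta'$-complete ultrafilter on $\cP_\kappa\theta''$ does not produce a single fine $\delta$-complete one---this is precisely the ``almost'' versus full compactness distinction the paper itself emphasizes around Definition~\ref{defparamcpt} and Theorem~\ref{buthm}. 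The purely logical route fails for the same reason: a theory $T\subseteq\bL_{\delta,\delta}$ of size $\theta'\geq\kappa\geq\delta$ can contain formulas of complexity cofinal in $\delta$, so satisfiability of each $T\cap\bL_{\delta_0,\delta_0}$, $\delta_0<\delta$, obtained from the lower compactness levels, does not yield satisfiability of $T$ (the ultraproduct argument for $\bL_{\delta,\delta}$ genuinely needs a $\delta$-complete ultrafilter). Closing this gap requires an idea absent from your outline, and indeed from the paper's own one-line proof: for instance, if $\delta$ is \emph{measurable} one can take the limit of the ultrafilters $U_{\delta_0}$, $\delta_0<\delta$, along a $\delta$-complete ultrafilter on $\delta$ (Menas's argument), obtaining the required fine $\delta$-complete ultrafilter on $\cP_\kappa\theta''$; for $\delta$ merely inaccessible no such amalgamation is available, and the step runs into the substantive set-theoretic problem of whether completeness cofinally below $\delta$ implies completeness $\delta$. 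So your write-up correctly isolates where the difficulty lies, but the resolution you propose---lining up indices by a clever choice of the level at which Fact~\ref{bu-str-fact} is applied---provably cannot work, and a correct proof must either add a hypothesis on $\delta$ (such as measurability) or supply an argument of a different kind.
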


\begin{proof}
Combine Theorem \ref{thmcptimpl} with Fact \ref{bu-str-fact} (recalling, again, Remark~\ref{c-u-sc-rem}).
\end{proof}

One final time, we note that this argument can be adapted, in straightforward fashion, to the case of $\cp_\lambda(F)$, the $\lambda$-pure powerful image, via the modifications suggested in Remark~\ref{rmklpure}(2).

\section{Further considerations}

The essential drift of this work is that large cardinals in the range from weakly to strongly compact admit both model- and category-theoretic characterizations, involving, respectively, tameness of AECs and closure properties of the powerful images of accessible functors.  Moreover, these equivalences pass to parametrizations of these notions that are intrinsically natural to each of the three disciplines.  

While it seems unlikely that the methods of this paper can be stretched to obtain similar model- and category-theoretic characterizations of other large cardinals (e.g. Woodin, supercompact, Vop\v enka, $C^{(n)}$-extendible), our excellent referee has asked if, in particular, one can find \emph{other} properties of accessible functors that \emph{do} characterize them---this question has already been addressed on the model-theoretic side (cf. \cite{boneymodlcs,bdgm}).  Per \cite[Ch. 6]{adamek-rosicky}, of course, Vop\v enka's Principle admits a remarkably nice equivalent: every subfunctor of an accessible functor is accessible.  We are largely content to leave the pursuit of further results along these lines to future work.

It is worth noting, though, that one of the innovations of the proof of \cite[3.2]{BT-R} (which underlies our Theorem~\ref{thmcptclosed}) is to provide a version of the argument using the characterization of compact cardinals in terms of elementary embeddings of the set-theoretic universe.  It is the opinion of the second author that this may offer a way to generate category-theoretic consequences---if not characterizations---of large cardinals of the types mentioned above, by simply testing the argument with the corresponding elementary embedding properties.




\bibliographystyle{alpha}
\bibliography{compactaccessible}

\newcommand{\etalchar}[1]{$^{#1}$}
\begin{thebibliography}{BGL{\etalchar{+}}16}

\bibitem[AM10]{abmag}
U.~Abraham and M.~Magidor.
\newblock Cardinal arithmetic.
\newblock In M.~Foreman and A.~Kanamori, editors, {\em Handbook of Set Theory},
  volume~II, pages 1149--1227. Springer, 2010.

\bibitem[AR94]{adamek-rosicky}
J.~Ad{\'a}mek and J.~Rosick{\'y}.
\newblock {\em Locally presentable and accessible categories}.
\newblock London Math. Society Lecture Notes. Cambridge University Press, 1994.

\bibitem[Bal09]{baldwinbk}
J.~Baldwin.
\newblock {\em Categoricity}, volume~50 of {\em University Lecture Series}.
\newblock American Mathematical Society, 2009.

\bibitem[BDGM]{bdgm}
W.~Boney, S.~Dimopolous, V.~Gitman, and M.~Magidor.
\newblock Model-theoretic characterizations of large cardinals revisited.

\bibitem[BGL{\etalchar{+}}16]{mu-aec-jpaa}
W.~Boney, R.~Grossberg, M.~Lieberman, J.~Rosick{\'y}, and S.~Vasey.
\newblock $\mu$-{A}bstract {E}lementary {C}lasses and other generalizations.
\newblock {\em The Journal of Pure and Applied Algebra}, 220(9):3048--3066,
  2016.

\bibitem[BM14]{bagaria-magidor}
J.~Bagaria and M.~Magidor.
\newblock Group radicals and strongly compact cardinals.
\newblock {\em Transactions of the American Mathematical Society},
  366:1857--1877, 2014.

\bibitem[Bon14]{boneylarge}
W.~Boney.
\newblock Tameness from large cardinal axioms.
\newblock {\em Journal of Symbolic Logic}, 79(4):1092--1119, December 2014.

\bibitem[Bon18]{boneymodlcs}
W.~Boney.
\newblock Model-theoretic characterization of large cardinals.
\newblock Accepted, arXiv:1708.07561v2, August 2018.

\bibitem[BR12]{rosbek}
Tibor Beke and Ji{\v r}\'i Rosick\'y.
\newblock Abstract elementary classes and accessible categories.
\newblock {\em Annals of Pure and Applied Logic}, 163(12):2008--2017, December
  2012.

\bibitem[BTR16]{BT-R}
A.~Brooke-Taylor and J.~Rosick\'y.
\newblock Accessible images revisited.
\newblock {\em Proceedings of the AMS}, 145(3):1317--1327, 2016.

\bibitem[BU17]{boun}
W.~Boney and S.~Unger.
\newblock Large cardinal axioms from tameness in {AECs}.
\newblock {\em Proceedings of the American Mathematical Society},
  145(10):4517--4532, 2017.

\bibitem[EM77]{emindec}
P.~Eklof and A.~Mekkler.
\newblock On constructing indecomposable groups in {L}.
\newblock {\em Journal of Algebra}, 49(1):96--103, 1977.

\bibitem[EM90]{emalmostfree}
P.~Eklof and A.~Mekkler.
\newblock {\em Almost Free Modules}, volume~46 of {\em North Holland
  Mathematical Library}.
\newblock North-Holland, 1990.

\bibitem[GV06]{grovantame}
R.~Grossberg and M.~VanDieren.
\newblock Galois-stability for tame abstract elementary classes.
\newblock {\em Journal of Mathematical Logic}, 6(1):25--49, 2006.

\bibitem[Hay19]{hayut}
Y.~Hayut.
\newblock Partial strong compactness and squares.
\newblock {\em Fundamenta Mathematicae}, 246:193--204, 2019.

\bibitem[Jec03]{jechbook}
Thomas Jech.
\newblock {\em Set Theory}.
\newblock Springer-Verlag, 3rd edition, 2003.

\bibitem[Lie18]{lcatth}
M.~Lieberman.
\newblock A category-theoretic characterization of almost measurable cardinals.
\newblock Submitted. URL: \url{http://arxiv.org/abs/1809.05953}, 2018.

\bibitem[LR16]{LRclass}
M.~Lieberman and J.~Rosick\'y.
\newblock Classification theory for accessible categories.
\newblock {\em Journal of Symbolic Logic}, 81(1):1647--1648, 2016.

\bibitem[LR17]{LiRo17}
M.~Lieberman and J.~Rosick\'y.
\newblock Hanf numbers via accessible images.
\newblock {\em Logical Methods in Computer Science}, 13(2):1--15, 2017.

\bibitem[LRV]{lrvfilt}
M.~Lieberman, J.~Rosick\'y, and S.~Vasey.
\newblock Sizes and filtrations in accessible categories.
\newblock To appear in \emph{Israel Journal of Mathematics}. URL:
  \url{http://arxiv.org/abs/1902.06777v3}.

\bibitem[LRV19]{internal-sizes-v2}
M.~Lieberman, J.~Rosick{\'y}, and S.~Vasey.
\newblock Internal sizes in $\mu$-abstract elementary classes.
\newblock {\em Journal of Pure and Applied Algebra}, 223(10):4560--4582, 2019.

\bibitem[MP89]{makkai-pare}
M.~Makkai and R.~Par{\'e}.
\newblock {\em Accessible Categories: The Foundations of Categorical Model
  Theory}, volume 104 of {\em Contemporary Mathematics}.
\newblock American Mathematical Society, 1989.

\bibitem[MS94]{magidor-shelah}
M.~Magidor and S.~Shelah.
\newblock When does almost free imply free? (for groups, transversals, etc.).
\newblock {\em Journal of the American Mathematical Society}, 7(4):769--830,
  October 1994.

\bibitem[She]{sh932}
S.~Shelah.
\newblock Maximal failures of sequence locality in {A.E.C.}
\newblock Preprint. URL: \url{http://arxiv.org/abs/0903.3614v4}.

\bibitem[She74]{shegrp}
S.~Shelah.
\newblock Infinite abelian groups---whitehead problem and some constructions.
\newblock {\em Israel Journal of Mathematics}, 18:243--256, 1974.

\bibitem[She87]{shelahaecs}
S.~Shelah.
\newblock Classification of nonelementary classes {II}, abstract elementary
  classes.
\newblock In J.~Baldwin, editor, {\em Classification theory}, volume 1292 of
  {\em Lecture Notes in Mathematics}, pages 419--497. Proceedings of the
  USA-Israel Conference on Classification Theory, Chicago, December 1985,
  Springer-Verlag, 1987.

\end{thebibliography}

\end{document}